\newif\ifpdf
\numberwithin{equation}{section} \swapnumbers
\newtheorem{satz}{Satz}[section]
\newtheorem{theorem}[satz]{Theorem}
\newtheorem{proposition}[satz]{Proposition}
\newtheorem{corollary}[satz]{Corollary}
\newtheorem{lemma}[satz]{Lemma}
\newtheorem{definition}[satz]{Definition}
\newtheorem{remark}[satz]{Remark}
\newcommand{\bbr}{\mathbb{R}}
\newcommand{\bbe}{\mathbb{E}}
\newcommand{\bbn}{\mathbb{N}}
\newcommand{\bbp}{\mathbb{P}}
\newcommand{\bbx}{\mathbb{X}}
\newcommand{\calc}{\mathcal{C}}
\newcommand{\calf}{\mathscr{F}}
\newcommand{\calm}{\mathcal{M}}
\newcommand{\scrc}{\mathscr{C}}
\newcommand{\scrd}{\mathscr{D}}
\newcommand{\Lip}{{\rm Lip}}
\newcommand{\la}{{\langle}}
\newcommand{\ra}{{\rangle}}
\newcommand{\bfx}{\mathbf{X}}
\newcommand{\tr}{{\rm tr}}
\begin{document}

\title[Invariant submanifolds for rough differential equations]{Invariant submanifolds for solutions to rough differential equations}
\author{Stefan Tappe}
\date{2 October, 2024}
\address{Albert Ludwig University of Freiburg, Department of Mathematical Stochastics, Ernst-Zermelo-Stra\ss{}e 1, D-79104 Freiburg, Germany}
\email{stefan.tappe@math.uni-freiburg.de}
\thanks{The author gratefully acknowledges financial support from the Deutsche Forschungsgemeinschaft (DFG, German Research Foundation) -- project number 444121509.}
\begin{abstract}
In this paper we provide necessary and sufficient conditions for invariance of finite dimensional submanifolds for rough differential equations (RDEs) with values in a Banach space. Furthermore, we apply our findings to the particular situation of random RDEs driven by $Q$-Wiener processes and random RDEs driven by $Q$-fractional Brownian motion.
\end{abstract}
\keywords{Rough differential equation, invariant submanifold, tangent space, infinite dimensional fractional Brownian motion}
\subjclass[2020]{60L20, 60L50, 60H10, 60G17, 60G22}

\maketitle\thispagestyle{empty}

\section{Introduction}

Consider a rough differential equation (RDE) of the form
\begin{align}\label{RDE}
\left\{
\begin{array}{rcl}
dY_t & = & f_0(Y_t) dt + f(Y_t) d \bfx_t
\\ Y_0 & = & \xi.
\end{array}
\right.
\end{align}
The state space of the RDE (\ref{RDE}) is a Banach space $W$, the driving signal $\bfx = (X,\bbx) \in \scrc^{\alpha}([0,T],V)$ is a rough path with values in another Banach space $V$ for some index $\alpha \in (\frac{1}{3},\frac{1}{2}]$, and the coefficients are suitable mappings $f_0 : W \to W$ and $f : W \to L(V,W)$.

Let $\calm$ be a finite dimensional $C^3$-submanifold of $W$. We say that the submanifold $\calm$ is (locally) invariant for the RDE (\ref{RDE}) if for each starting point $\xi \in \calm$ the solution $Y$ to (\ref{RDE}) stays (locally) on $\calm$. For stochastic equations the invariance of submanifolds has already been investigated in the literature; see, for example, \cite{Milian-manifold} for finite dimensional equations and \cite{Filipovic-inv, Nakayama, FTT-manifolds} for infinite dimensional equations driven by Wiener processes, and \cite{Ohashi} for infinite dimensional equations driven by fractional Brownian motion.

By now, there is a well-established theory of rough paths, including rough integration and RDEs; see, for example \cite{Friz-Hairer}. Some articles even deal with rough partial differential equations (RPDEs); see, for example \cite{Gubinelli-Tindel, Hairer, Teichmann, Hesse-Neamtu-local, Hesse-Neamtu-global, Tappe-rough}. Moreover, stochastic processes such as Brownian motion and fractional Brownian motion can be enhanced such that the theory of rough paths applies. This offers a pathwise approach to the study of path properties of stochastic processes and is tailor-made for stochastic invariance problems.

In this paper, we will clarify when a finite dimensional submanifold $\calm$ is locally invariant for the RDE (\ref{RDE}), and provide conditions in terms of the coefficients $f_0$ and $f$, which are necessary and sufficient; see Theorem \ref{thm-main} and Corollary \ref{cor-main}. Furthermore, we will apply these results to the particular situation of random RDEs driven by $Q$-Wiener processes (see Corollary \ref{cor-random-RDE-Ito}) and to random RDEs driven by $Q$-fractional Brownian motion (see Corollary \ref{cor-random-RDE-frac}), and compare our findings with those from the literature.

So far, there are only few references dealing with invariance results for RDEs; see, for example \cite{Coutin} for finite dimensional RDEs, the articles \cite{Ciotir, Nie, Xu, Trong} for equations driven by fractional Brownian motion, and \cite{Melnikov} for equations driven by a mixed process. Some papers (such as \cite{Cass, Armstrong}) develop a theory of rough integration and RDEs on manifolds. Let us also mention recent papers such as \cite{Kuehn, Ghani-1, Ghani-2}, which deal with invariant manifolds for RDEs from the perspective of dynamical systems.

The remainder of this paper is organized as follows. In Section \ref{sec-main-result} we present and prove our main result and its consequences. Afterwards, in Section \ref{sec-random-RDEs} we apply our findings to random RDEs; in particular to RDEs driven by infinite dimensional fractional Brownian motion. For convenience of the reader, we provide the required background about rough paths and rough differential equations in Appendix \ref{app-rough}, about finite dimensional submanifolds in Banach spaces in Appendix \ref{app-manifolds}, and about functions in Banach spaces in Appendix \ref{app-functions}.

\section{Presentation of the main result}\label{sec-main-result}

In this section we present and prove our main result as well as some consequences. Let $V$ and $W$ be Banach spaces. We fix a time horizon $T \in \bbr_+$. Let $\bfx = (X,\bbx) \in \scrc^{\alpha}([0,T],V)$ be a rough path for some index $\alpha \in (\frac{1}{3},\frac{1}{2}]$. Consider the RDE (\ref{RDE}) with a Lipschitz continuous mapping $f_0 : W \to W$ and a mapping $f : W \to L(V,W)$ of class $C_b^3$. According to Theorem \ref{thm-existence-RDE} for every $\xi \in W$ there exists a unique solution $(Y,Y') \in \scrd_X^{2 \alpha}([0,T],W)$ to the RDE (\ref{RDE}) with $Y_0 = \xi$. Let $\calm$ be a finite dimensional $C^3$-submanifold of $W$.

\begin{definition}
We call the submanifold $\calm$ \emph{locally invariant} for the RDE (\ref{RDE}) if for each $\xi \in \calm$ there exists $T_0 \in (0,T]$ such that $Y_t \in \calm$ for all $t \in [0,T_0]$, where $(Y,Y')$ denotes the solution to the RDE (\ref{RDE}) with $Y_0 = \xi$.
\end{definition}

\begin{definition}
We call the submanifold $\calm$ \emph{(globally) invariant} for the RDE (\ref{RDE}) if for each $\xi \in \calm$ we have $Y_t \in \calm$ for all $t \in [0,T]$, where $(Y,Y')$ denotes the solution to the RDE (\ref{RDE}) with $Y_0 = \xi$.
\end{definition}

\begin{theorem}\label{thm-main}
We assume that $X$ is truly rough and that $[\bfx]_t = x t$, $t \in [0,T]$ for some symmetric element $x \in V \otimes V$. Then the following statements are equivalent:
\begin{enumerate}
\item[(i)] The submanifold $\calm$ is locally invariant for the RDE (\ref{RDE}).

\item[(ii)] For each $y \in \calm$ we have
\begin{align}\label{tang-drift}
f_0(y) - \frac{1}{2} D f(y) f(y) x &\in T_y \calm,
\\ \label{tang-vol} f(y)v &\in T_y \calm, \quad v \in V.
\end{align}
\end{enumerate}
If the two equivalent statements (i) and (ii) are fulfilled and the submanifold $\calm$ is closed as a subset of $W$, then $\calm$ is even globally invariant for the RDE (\ref{RDE}).
\end{theorem}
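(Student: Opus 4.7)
The plan is to work locally at each point of $\calm$ using both dual descriptions of a finite-dimensional submanifold: near $y \in \calm$ a $C^3$ submersion $\psi : U \to F$ cutting $\calm$ out as $\psi^{-1}(0)$, and a $C^3$ parametrization $\phi : O \to W$ whose image lies in $\calm$. The bridge to the Stratonovich-type correction $\tfrac12 Df(y) f(y) x$ appearing in \eqref{tang-drift} is the It\^o formula for rough paths, which is applicable precisely because $\bfx$ has the bracket $[\bfx]_t = xt$; the second-order drift it produces contains exactly the compensator that distinguishes \eqref{tang-drift} from the naive condition $f_0(y) \in T_y\calm$.

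For (i) $\Rightarrow$ (ii) I would first derive \eqref{tang-vol}: given $y \in \calm$, the composition $\psi \circ Y$ is identically zero near $t = 0$, but by the chain rule for controlled paths it also has Gubinelli derivative $D\psi(Y) f(Y)$. Since $X$ is truly rough, the Gubinelli derivative of any controlled path is uniquely determined by the path itself, which forces $D\psi(Y_t) f(Y_t) = 0$ and, evaluated at $t = 0$, yields \eqref{tang-vol}. For \eqref{tang-drift} I would apply the rough It\^o formula to $\psi \circ Y$, note that the rough integral against $d\bfx$ vanishes by what was just proved, and extract the leading coefficient in $t$ of the remaining absolutely continuous piece. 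The resulting identity involves $D\psi(y) f_0(y)$ and the Hessian term $D^2\psi(y)(f(y)^{\otimes 2} x)$; differentiating the tangency relation $D\psi(\cdot) f(\cdot) \equiv 0$ along $\calm$ in a tangent direction and polarizing on the symmetric tensor $x$ converts this Hessian into $-D\psi(y) Df(y) f(y) x$, whence \eqref{tang-drift}.

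For (ii) $\Rightarrow$ (i) I would push the RDE through the parametrization. Given $\xi \in \calm$ and $\phi$ with $\phi(z_0) = \xi$, condition \eqref{tang-vol} uniquely determines a map $\tilde f(z) \in L(V,\bbr^d)$ by $D\phi(z) \tilde f(z) = f(\phi(z))$, and a twofold differentiation of this defining identity, together with \eqref{tang-drift}, shows that $f_0(\phi(z)) - \tfrac12 D^2\phi(z)(\tilde f(z)^{\otimes 2} x) \in T_{\phi(z)}\calm$, thereby producing a Lipschitz $\tilde f_0 : O \to \bbr^d$. I would then solve the induced RDE $dZ = \tilde f_0(Z)\, dt + \tilde f(Z)\, d\bfx$ on the chart, and verify via the rough It\^o formula applied to $\phi \circ Z$ that $\phi(Z)$ solves the original RDE with initial value $\xi$; uniqueness of RDE solutions then gives $Y_t = \phi(Z_t) \in \calm$ for small $t$.

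The main obstacle throughout is the consistent bookkeeping of the bracket term: in both directions one must differentiate the tangency identity along $\calm$ to convert $D^2(\cdot)(f^{\otimes 2} x)$ into $\pm D(\cdot)\, Df\, f\, x$, and the symmetry of $x$ is essential here. The global invariance claim when $\calm$ is closed then follows by a standard maximal-time argument: setting $\tau := \sup\{t \in [0,T] : Y_s \in \calm \text{ for all } s \in [0,t]\}$, closedness of $\calm$ and continuity of $Y$ force $Y_\tau \in \calm$, and if $\tau < T$ then local invariance applied to the time-shifted rough path $\bfx^{\tau}_{s,t} := \bfx_{\tau+s,\tau+t}$ with initial value $Y_\tau$, combined with uniqueness of RDE solutions, would contradict the maximality of $\tau$.
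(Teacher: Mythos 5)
Your proposal is correct in substance, and for the direction (ii)~$\Rightarrow$~(i) and for the global statement it essentially coincides with the paper's argument: the paper also pushes the equation into a chart via a (globally extended) parametrization $\phi$ and a projection $\ell \in L(W,\bbr^m)$, defines the chart coefficients $g = \ell \circ f \circ \phi$ and a drift $g_0$ containing exactly your correction term, pulls the chart solution back with the rough It\^{o} formula, and concludes global invariance by the same maximal-time/concatenation argument. Where you genuinely diverge is in (i)~$\Rightarrow$~(ii): you work with a local defining submersion $\psi$ with $\calm = \psi^{-1}(0)$ and $T_y\calm = \ker D\psi(y)$, obtain \eqref{tang-vol} from uniqueness of the Gubinelli derivative of the constant path $\psi(Y) \equiv 0$, and obtain \eqref{tang-drift} by applying It\^{o} to $\psi(Y)$ and converting the Hessian term $D^2\psi(y)(f(y),f(y))x$ into $-D\psi(y)Df(y)f(y)x$ by differentiating the tangency identity along $\calm$ and using the symmetry of $x$. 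The paper instead stays with the parametrization: it sets $Z = \ell(Y)$, shows $Z$ solves the induced chart RDE, applies It\^{o} to $\phi(Z) = Y$, and invokes the Doob--Meyer theorem for truly rough paths to match both the rough-integrand and the drift against those of the original equation; the Hessian bookkeeping is then done through the decomposition $Df(Y)f(Y) = D\phi(Z)Dg(Z)g(Z) + D^2\phi(Z)(g(Z),g(Z))$ (Proposition \ref{prop-decomposition}), which plays exactly the role of your differentiated tangency relation, just expressed in the chart rather than through the defining function. Your route is the more classical Nagumo-type argument and arguably isolates the geometric mechanism more transparently; the paper's route has the practical advantage that the only global smooth object it needs is the extended parametrization already supplied by Proposition \ref{prop-global-para}, whereas you would additionally have to construct a $C^3_b$ defining function on a neighborhood (e.g.\ $\psi(y) = y - \phi(\ell(y))$) and justify applying the It\^{o} formula, stated for $C_b^3$ maps on all of $W$, to it.

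Two minor points you should tighten if you write this out. First, in (ii)~$\Rightarrow$~(i) your $\tilde f$ and $\tilde f_0$ are ``uniquely determined'' pointwise by injectivity of $D\phi(z)$, but you must also verify they are $C_b^3$ respectively Lipschitz \emph{and globally defined} in order to invoke the existence theorem for the chart RDE; the paper secures this by writing them explicitly as $\ell \circ f \circ \phi$ and as a composition of $\ell$, evaluation at $x$, and continuous bilinear operators. Second, the uniqueness of the Gubinelli derivative and the Doob--Meyer statement are quoted in the paper for $L(V,W)$-valued controlled paths, while you apply uniqueness to the $\psi$-valued path $\psi(Y)$; this is harmless since the underlying result holds for arbitrary Banach targets, but it is worth a remark.
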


\begin{remark}
In view of conditions (\ref{tang-drift}) and (\ref{tang-vol}), note that for each $y \in W$ we have
\begin{align*}
f(y) &\in L(V,W),
\\ Df(y) &\in L(W,L(V,W)),
\\ Df(y)f(y) &\in L(V,L(V,W)) \hookrightarrow L(V \otimes V,W).
\end{align*}
\end{remark}

\begin{remark}\label{rem-main}
Suppose that $V$ is a separable Hilbert space. Then we have $V \otimes V \simeq L_2(V)$, where $L_2(V)$ denotes the space of Hilbert Schmidt operators on $V$. Therefore, there are an orthonormal basis $\{ e_k \}_{k \in \bbn}$ of $V$ and a sequence $(\lambda_k)_{k \in \bbn} \subset \bbr$ with $\sum_{k=1}^{\infty} \lambda_k^2 < \infty$ such that
\begin{align*}
x = \sum_{k=1}^{\infty} \lambda_k ( e_k \otimes e_k ).
\end{align*}
Consequently, using Proposition \ref{prop-sum-in-drift} we can express the drift condition (\ref{tang-drift}) as
\begin{align*}
f_0(y) - \frac{1}{2} \sum_{k=1}^{\infty} \lambda_k D f_k(y) f_k(y) \in T_y \calm,
\end{align*}
where the mappings $f_k : W \to W$, $k \in \bbn$ are given by $f_k(y) := f(y) e_k$, $y \in W$.
\end{remark}

As an immediate consequence of Theorem \ref{thm-main} and Lemma \ref{lemma-geometric} we obtain the following result.

\begin{corollary}\label{cor-main}
Suppose that $\mathbf{X} \in \scrc_g^{\alpha}([0,T],V)$ is a weakly geometric rough path such that $X$ is truly rough. Then the following statements are equivalent:
\begin{enumerate}
\item[(i)] The submanifold $\calm$ is locally invariant for the RDE (\ref{RDE}).

\item[(ii)] For each $y \in \calm$ we have
\begin{align*}
f_0(y) &\in T_y \calm,
\\ f(y)v &\in T_y \calm, \quad v \in V.
\end{align*}
\end{enumerate}
If the two equivalent statements (i) and (ii) are fulfilled and the submanifold $\calm$ is closed as a subset of $W$, then $\calm$ is even globally invariant for the RDE (\ref{RDE}).
\end{corollary}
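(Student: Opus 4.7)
The plan is to localize the problem and pass to a finite-dimensional RDE via a local $C^3$-parametrization $\phi : U \to W$ of $\calm$ near a point $\xi \in \calm$, where $U \subset \bbr^d$ is open with $0 \in U$, $\phi(0) = \xi$, and $\phi(U)$ is an open neighbourhood of $\xi$ in $\calm$. Under the tangency condition \eqref{tang-vol} the map $g(u) := D\phi(u)^{-1} f(\phi(u))$ is a well-defined element of $L(V, \bbr^d)$, and a direct differentiation of $f \circ \phi = D\phi \cdot g$ produces the algebraic identity
\begin{align*}
Df(\phi(u)) f(\phi(u)) x = D^2\phi(u) \bigl( (g \otimes g)(u) \, x \bigr) + D\phi(u) \bigl( (Dg \cdot g)(u) \, x \bigr),
\end{align*}
whose second summand visibly lies in $T_{\phi(u)} \calm$. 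Hence condition \eqref{tang-drift} is equivalent to $f_0(\phi(u)) - \tfrac{1}{2} D^2\phi(u)\bigl( (g \otimes g)(u) \, x \bigr) \in T_{\phi(u)} \calm$, which permits the definition of a $\bbr^d$-valued drift $\tilde g_0 : U \to \bbr^d$ via
\begin{align*}
D\phi(u) \tilde g_0(u) = f_0(\phi(u)) - \tfrac{1}{2} D^2\phi(u)\bigl( (g \otimes g)(u) \, x \bigr).
\end{align*}

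For the implication (ii) $\Rightarrow$ (i), I would solve the finite-dimensional RDE $dZ_t = \tilde g_0(Z_t) dt + g(Z_t) d\bfx_t$ with $Z_0 = 0$ on a short interval $[0, T_0]$ on which $Z$ remains in $U$, and set $Y_t := \phi(Z_t)$. The rough-path It\^o formula for composing a $C^3$-map with a controlled rough path, together with the bracket $[\bfx]_t = xt$, gives
\begin{align*}
\phi(Z_t) - \phi(Z_0) = \int_0^t D\phi(Z_s) \, dZ_s + \tfrac{1}{2} \int_0^t D^2\phi(Z_s) \bigl( (g \otimes g)(Z_s) \, x \bigr) ds;
\end{align*}
substituting the dynamics of $Z$ together with the defining relations for $g$ and $\tilde g_0$ collapses this to $Y_t - Y_0 = \int_0^t f_0(Y_s) ds + \int_0^t f(Y_s) d\bfx_s$, so by the uniqueness assertion of Theorem \ref{thm-existence-RDE} the process $Y$ is the solution of \eqref{RDE} starting at $\xi$, and $Y$ stays on $\calm$ by construction.

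For (i) $\Rightarrow$ (ii), I would use a $C^3$-chart around $\xi$ flattening $\calm$ onto a subspace $V_1 \subset W$ with closed topological complement $V_2$ (available since $T_\xi\calm$ is finite dimensional, hence complemented). Since $Y_t \in \calm$ locally, the $V_2$-component of the flattened trajectory vanishes identically, and because $(Y,Y')$ is a controlled rough path with $X$ truly rough, the $V_2$-component of its Gubinelli derivative must vanish as well; translating back yields $f(Y_s) V \subset T_{Y_s}\calm$ along the trajectory, hence \eqref{tang-vol} at $\xi$, and by arbitrariness of $\xi \in \calm$ on all of $\calm$. With \eqref{tang-vol} in place, write $Y_t = \phi(Z_t)$ locally so that $(Z, g(Z))$ is a controlled rough path; applying the It\^o formula to $Y = \phi(Z)$ and comparing with the original RDE forces $f_0(Y) - \tfrac{1}{2} D^2\phi(Z)\bigl( (g \otimes g)(Z) \, x \bigr)$ to lie in the image of $D\phi(Z)$, i.e. in $T_Y \calm$; evaluating at $t = 0$ and invoking the identity above gives \eqref{tang-drift} at $\xi$. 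For the global invariance statement when $\calm$ is closed in $W$, set $T^* := \sup\{t \in [0,T] : Y_s \in \calm \text{ for all } s \in [0,t]\}$; continuity of $Y$ and closedness of $\calm$ force $Y_{T^*} \in \calm$, and if $T^* < T$ then local invariance at the starting point $Y_{T^*}$ combined with uniqueness of RDE solutions extends invariance past $T^*$, contradicting maximality.

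The main obstacle is the correct formulation and application of the rough-path It\^o formula with a non-vanishing bracket $[\bfx]$, since it is precisely the bracket term which injects the second-order correction $\tfrac{1}{2} Df(y) f(y) x$ into the drift condition; in particular one must argue carefully that the bracket of the controlled rough path $Z$ is $d[Z]_t = (g \otimes g)(Z_t) \, x \, dt$. A related subtlety is verifying that the local projection of $Y$ through the flattening chart is genuinely a controlled rough path, so that the uniqueness of the Gubinelli derivative implied by true roughness can be exploited to deduce the vector field condition.
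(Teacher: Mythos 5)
There is a genuine gap: your argument never uses the hypothesis that $\mathbf{X}$ is weakly geometric, and consequently it proves the wrong equivalence. What you establish is that local invariance is equivalent to the conditions \eqref{tang-drift} and \eqref{tang-vol} of Theorem \ref{thm-main}, i.e.\ to $f_0(y) - \tfrac{1}{2} Df(y)f(y)x \in T_y\calm$ together with $f(y)v \in T_y\calm$. The corollary asserts equivalence with the plain condition $f_0(y) \in T_y\calm$, with no second-order correction. The missing step --- and the only step that is actually specific to this corollary --- is Lemma \ref{lemma-geometric}: for a weakly geometric rough path one has ${\rm Sym}(\bbx_{s,t}) = \tfrac{1}{2} X_{s,t}\otimes X_{s,t}$, hence $[\mathbf{X}] = 0$, so the hypothesis $[\bfx]_t = xt$ of Theorem \ref{thm-main} holds with $x = 0$ and the correction term $\tfrac{1}{2}Df(y)f(y)x$ vanishes identically. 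Without this observation your drift condition does not reduce to the one in statement (ii), and indeed the whole machinery you rebuild (the identity $Df(\phi(u))f(\phi(u))x = D^2\phi((g\otimes g)x) + D\phi((Dg\cdot g)x)$, the bracket term in the It\^o formula) is there precisely to handle a nonzero $x$; in the weakly geometric case it all collapses. The paper's proof of the corollary is exactly this two-line reduction: cite Theorem \ref{thm-main} and Lemma \ref{lemma-geometric}. Your proposal instead re-derives Theorem \ref{thm-main} from scratch along essentially the same lines as the paper's proof of that theorem (localization via a $C^3$ parametrization, It\^o's formula, true roughness and Doob--Meyer to identify drift and Gubinelli derivative, and the same stopping-time concatenation argument for global invariance), but stops one step short of the statement actually being proved.

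A secondary point, if you do insist on reproving the theorem rather than citing it: your definition $g(u) := D\phi(u)^{-1} f(\phi(u))$ is well defined pointwise under \eqref{tang-vol}, but to solve the finite-dimensional RDE for $Z$ via Theorem \ref{thm-existence-RDE} you need $g$ to be of class $C_b^3$ (and the drift $\tilde g_0$ Lipschitz), which requires an argument that $u \mapsto D\phi(u)^{-1}\vert_{T_{\phi(u)}\calm}$ is sufficiently regular. The paper sidesteps this by using the global left inverse $\ell \in L(W,\bbr^m)$ from Proposition \ref{prop-global-para} and setting $g = \ell\circ f\circ\phi$, whose $C_b^3$ regularity is immediate from the chain rule; the tangency conditions then guarantee $D\phi(z)g(z) = f(\phi(z))$ on the chart, which is all that is needed. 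Similarly, your flattening-chart argument for (i) $\Rightarrow$ (ii) presupposes a $C^3$ chart defined on an open subset of the ambient Banach space, which is again exactly what Proposition \ref{prop-global-para} packages via the pair $(\phi,\ell)$; as sketched, this part is plausible but would need that proposition (or an equivalent) to be made rigorous.
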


\begin{proof}[Proof of Theorem \ref{thm-main}]
(i) $\Rightarrow$ (ii): Let $\xi \in \calm$ be arbitrary. Furthermore, let $(Y,Y') \in \scrd_X^{2 \alpha}([0,T],W)$ be the solution to the RDE (\ref{RDE}) with $Y_0 = \xi$. Since $\calm$ is locally invariant for the RDE (\ref{RDE}), there exists $T_0 \in (0,T]$ such that $Y_t \in \calm$ for all $t \in [0,T_0]$. According to Proposition \ref{prop-global-para} there exist a mapping $\phi \in C_b^3(\bbr^m,W)$, an open neighborhood $O \subset \bbr^m$ of $\xi$ such that $\phi|_O : O \to U \cap \calm$ is a local parametrization around $\xi$, and a continuous linear operator $\ell \in L(W,\bbr^m)$ such that $\phi(\ell(y)) = y$ for all $y \in U \cap \calm$. By the continuity of $Y$ we may assume that $T_0$ is small enough such that $Y_t \in U \cap \calm$ for all $t \in [0,T_0]$. We define the new mappings
\begin{align*}
g_0 &:= \ell \circ f_0 \circ \phi : \bbr^m \to \bbr^m,
\\ g &:= \ell \circ f \circ \phi : \bbr^m \to L(V,\bbr^m),
\end{align*}
where, according to Lemma \ref{lemma-embedding-operators}, the linear operator $\ell$ is regarded as an element from $L(L(V,W),L(V,\bbr^m))$. By Proposition \ref{prop-Cb1-Lipschitz} the mapping $g_0$ is Lipschitz continuous. Furthermore, by Proposition \ref{prop-smooth-chain} and Proposition \ref{prop-smooth-linear} the mapping $g$ is of class $C_b^3$. Set $\eta := \ell(\xi) \in O$. We define the $O$-valued path $Z \in \calc^{\alpha}([0,T_0],\bbr^m)$ as $Z := \ell(Y)$. Noting that $Y = \phi(Z)$, we obtain
\begin{align*}
Z_t &= \ell(\xi) + \int_0^t \ell(f_0(Y_s)) ds + \int_0^t \ell(f(Y_s)) d\bfx_s
\\ &= \eta + \int_0^t \ell(f_0(\phi(Z_s))) ds + \int_0^t \ell(f(\phi(Z_s))) d\bfx_s, \quad t \in [0,T_0].
\end{align*}
Therefore, by Corollary \ref{cor-Gubinelli-der-mixed} we have $(Z,Z') \in \scrd_X^{2\alpha}([0,T_0],\bbr^m)$, where $Z' := g(Z)$, and it follows that $(Z,Z')$ is the local solution to the RDE (\ref{rough-ODE-Z-app}) with $Z_0 = \eta$ on the interval $[0,T_0]$. Moreover, by It\^{o}'s formula (Corollary \ref{cor-Ito}) we obtain
\begin{align*}
Y_t &= \xi + \int_0^t \bigg( D \phi(Z_s) g_0(Z_s) + \frac{1}{2} D^2 \phi(Z_s)(g(Z_s),g(Z_s)) x \bigg) ds
\\ &\quad + \int_0^t D \phi(Z_s)g(Z_s) d \bfx_s, \quad t \in [0,T_0].
\end{align*}
Since $Y$ is a solution to the RDE (\ref{RDE}) with $Y_0 = \xi$, by the Doob-Meyer result for rough integrals (Theorem \ref{thm-Doob-Meyer}) it follows that
\begin{align}\label{tang-proof-1}
f(Y) &= D \phi(Z) g(Z),
\\ \label{tang-proof-2} f_0(Y) &= D \phi(Z) g_0(Z) + \frac{1}{2} D^2 \phi(Z) (g(Z),g(Z))) x.
\end{align}
Since $\xi \in \calm$ was arbitrary, by (\ref{tang-proof-1}) we deduce (\ref{tang-vol}). By Proposition \ref{prop-decomposition} we have the decomposition
\begin{align*}
D f(Y) f(Y) = D \phi(Z) Dg(Z) g(Z) + D^2 \phi(Z) (g(Z),g(Z)).
\end{align*}
Therefore, by (\ref{tang-proof-2}) we obtain
\begin{align*}
f_0(Y) - \frac{1}{2} Df(Y) f(Y)x &= D \phi(z) g_0(Z) + \frac{1}{2} D^2 \phi(z) (g(Z),g(Z))) x
\\ &\quad - \frac{1}{2} Df(Y) f(Y)x
\\ &= D \phi(z) \bigg( g_0(Z) - \frac{1}{2} Dg(Z) g(Z) x \bigg).
\end{align*}
Since $\xi \in \calm$ was arbitrary, this proves (\ref{tang-drift}).

\noindent (ii) $\Rightarrow$ (i): Let $\xi \in \calm$ be arbitrary. Furthermore, let $(Y,Y') \in \scrd_X^{2 \alpha}([0,T],W)$ be the solution to the RDE (\ref{RDE}) with $Y_0 = \xi$. According to Proposition \ref{prop-global-para} there exist a mapping $\phi \in C_b^3(\bbr^m,W)$, an open neighborhood $O \subset \bbr^m$ of $\xi$ such that $\phi|_O : O \to U \cap \calm$ is a local parametrization around $\xi$, and a continuous linear operator $\ell \in L(W,\bbr^m)$ such that $\phi(\ell(y)) = y$ for all $y \in U \cap \calm$. Furthermore, for all $y \in U \cap \calm$ and all $w \in T_y \calm$ we have
\begin{align}\label{tangent-local}
w = D \phi(z) \ell(w),
\end{align}
where $z := \ell(y) \in O$. Thus, by (\ref{tang-drift}) and (\ref{tang-vol}), for all $y \in U \cap \calm$ we have
\begin{align}\label{f0-local}
f_0(y) - \frac{1}{2} D f(y) f(y) x &= D \phi(z) \ell \bigg( f_0(y) - \frac{1}{2} D f(y) f(y) x \bigg),
\\ \label{f-local} f(y)v &= D \phi(z) \ell (f(y)v), \quad v \in V,
\end{align}
where $z := \ell(y) \in O$. We define the new mapping
\begin{align*}
g := \ell \circ f \circ \phi : \bbr^m \to L(V,\bbr^m),
\end{align*}
where, according to Lemma \ref{lemma-embedding-operators}, the linear operator $\ell$ is regarded as an element from $L(L(V,W),L(V,\bbr^m))$. By Proposition \ref{prop-smooth-chain} and Proposition \ref{prop-smooth-linear} the mapping $g$ is of class $C_b^3$. Furthermore, by (\ref{f-local}) we have
\begin{align}\label{f-local-2}
f(y) = D\phi(z) g(z), \quad y \in U \cap \calm,
\end{align}
where $z := \ell(y) \in O$. We define $g_0 : \bbr^m \to \bbr^m$ as
\begin{align}\label{def-g0}
g_0(z) := \ell \bigg( f_0(y) - \frac{1}{2} D f(y) f(y) x + \frac{1}{2} D \phi(z) Dg(z) g(z) x \bigg),
\end{align}
where $y := \phi(z)$. Note that we can express $g_0$ as
\begin{align*}
g_0 = \ell \circ \bigg( f_0 \circ \phi - \frac{1}{2} {\rm ev}_x \circ \Big( B_1(Df \circ \phi, f \circ \phi) - B_3(D \phi,B_2(Dg,g)) \Big) \bigg),
\end{align*}
where the continuous bilinear operators
\begin{align*}
&B_1 : L(W,L(V,W)) \times L(V,W) \to L(V,L(V,W)) \hookrightarrow L(V \otimes V,W),
\\ &B_2 : L(\bbr^m,L(V,\bbr^m)) \times L(V,\bbr^m) \to L(V,L(V,\bbr^m)) \hookrightarrow L(V \otimes V, \bbr^m)
\\ &B_3 : L(\bbr^m,W) \times L(V \otimes V, \bbr^m) \to L(V \otimes V,W)
\end{align*}
are respectively given by the compositions $B_i(R,S) = R \circ S$, $i=1,2,3$, and ${\rm ev}_x : L(V \otimes V,W) \to W$ denotes the evaluation operator $A \mapsto Ax$, which is a continuous linear operator. Therefore, by Proposition \ref{prop-Cb1-Lipschitz} and Proposition \ref{prop-bilinear-Lipschitz} the mapping $g_0$ is Lipschitz continuous. Hence, by Theorem \ref{thm-existence-RDE} there exists a unique solution $(Z,Z') \in \scrd_X^{2\alpha}([0,T],\bbr^m)$ to the RDE (\ref{rough-ODE-Z-app}) with $Z_0 = \eta$, where $\eta := \phi^{-1}(\xi) \in O$. Since $O$ is open and $Z$ is continuous, there exists $T_0 \in (0,T]$ such that $Z_t \in O$ for all $t \in [0,T_0]$. We define the $U \cap \calm$-valued path $Y := \phi(Z)$. By It\^{o}'s formula (Corollary \ref{cor-Ito}) we obtain
\begin{align*}
Y_t &= \xi + \int_0^t \bigg( D \phi(Z_s) g_0(Z_s) + \frac{1}{2} D^2 \phi(Z_s)(g(Z_s),g(Z_s)) x \bigg) ds.
\\ &\quad + \int_0^t D \phi(Z_s)g(Z_s) d \bfx_s, \quad t \in [0,T_0].
\end{align*}
By Corollary \ref{cor-Gubinelli-der-mixed} we have $(Y,Y') \in \scrd_X^{2\alpha}([0,T_0],W)$ with $Y' = D \phi(Z) g(Z)$, and by (\ref{f-local-2}) it follows that $Y' = f(Y)$. Therefore, by Proposition \ref{prop-decomposition} we have the decomposition
\begin{align*}
D f(Y) f(Y) = D \phi(Z) Dg(Z) g(Z) + D^2 \phi(Z) (g(Z),g(Z)).
\end{align*}
Now, by (\ref{def-g0}), (\ref{f0-local}) and (\ref{tangent-local}) we deduce that
\begin{align*}
&D \phi(Z) g_0(Z) + \frac{1}{2} D^2 \phi(Z)(g(Z),g(Z)) x
\\ &= D \phi(Z) \ell \bigg( f_0(Y) - \frac{1}{2} D f(Y) f(Y) x + \frac{1}{2} D \phi(Z) Dg(Z) g(Z)x \bigg)
\\ &\quad + \frac{1}{2} D^2 \phi(Z)(g(Z),g(Z)) x
\\ &= f_0(Y) - \frac{1}{2} D f(Y) f(Y) x + \frac{1}{2} D \phi(Z) Dg(Z) g(Z) x + \frac{1}{2} D^2 \phi(Z)(g(Z),g(Z)) x
\\ &= f_0(Y).
\end{align*}
Consequently, the path $(Y,Y')$ is a local solution to the RDE (\ref{RDE}) with $Y_0 = \xi$.

\noindent \textit{Additional statement:} Let $\xi \in \calm$ be arbitrary, and let $(Y,Y') \in \scrd_X^{2 \alpha}([0,T],W)$ be the solution to the RDE (\ref{RDE}) with $Y_0 = \xi$. We define
\begin{align}\label{def-stopping-time}
T_0 := \inf \{ t \in [0,T] : Y_t \notin \calm \}.
\end{align}
Since the submanifold $\calm$ is locally invariant for the RDE (\ref{RDE}), we have $T_0 \in (0,T]$. Suppose that $T_0 < T$. Since $\calm$ is closed as a subset of $W$, by the continuity of $Y$ we have $Y_t \in \calm$ for all $t \in [0,T_0]$. Moreover, since the submanifold $\calm$ is locally invariant for the RDE (\ref{RDE}), there exists $S_0 \in (0,T]$ such that $Z_t \in \calm$ for all $t \in [0,S_0]$, where $(Z,Z') \in \scrd_X^{2 \alpha}([0,T],W)$ denotes the solution to the RDE (\ref{RDE}) with $Z_0 = Y_{T_0}$. By Lemma \ref{lemma-concat} we have $Y_t = Z_{t-T_0}$ for all $t \in [T_0,T]$, and hence $Y_t \in \calm$ for all $t \in [T_0,T_0+S_0]$ with $t \leq T$, which contradicts the definition (\ref{def-stopping-time}) of $T_0$.
\end{proof}

\section{Infinite dimensional random rough differential equations}\label{sec-random-RDEs}

In this section we will apply our invariance results to random RDEs; in particular to RDEs driven by infinite dimensional fractional Brownian motion. Let $(\Omega,\calf,(\calf_t)_{t \in \bbr_+},\bbp)$ be a filtered probability space satisfying the usual conditions. Furthermore, let $V$ be a separable Hilbert space, and let $W$ be a Banach space. Let $f_0 : W \to W$ and $f : W \to L(V,W)$ be mappings such that $f_0$ is Lipschitz continuous and $f$ is of class $C_b^3$.

In order to introduce the driving noise $\mathbf{X} = (X,\bbx)$, let us briefly review some results about infinite dimensional fractional Brownian motion, which has been studied, for example in \cite{Duncan, Grecksch}.

Let $Q \in L_1^{++}(V)$ be a nuclear, self-adjoint, positive definite linear operator. Recall that an $V$-valued centered Gaussian process $X$ is called a \emph{$Q$-fractional Brownian motion} with Hurst index $H \in (\frac{1}{3},\frac{1}{2}]$ if its covariance function is given by
\begin{align}\label{cov-fct-0}
\bbe[X_s \otimes X_t] = \frac{1}{2} \Big( s^{2H} + t^{2H} - |t-s|^{2H} \Big) Q, \quad s,t \in \bbr_+.
\end{align}
For what follows, let $X$ be a $Q$-fractional Brownian motion with Hurst index $H \in (\frac{1}{3},\frac{1}{2}]$. We also fix a time horizon $T \in \bbr_+$. From (\ref{cov-fct-0}) it follows that for all $v,w \in V$ we have
\begin{align}\label{cov-fct}
\bbe [ \la X_s,v \ra \la X_t,w \ra ] = \frac{1}{2} \la Qv,w \ra \Big( s^{2H} + t^{2H} - |t-s|^{2H} \Big), \quad s,t \in [0,T].
\end{align}
Furthermore, for each $\alpha \in (\frac{1}{3},\frac{1}{2})$ we have $\bbp$-almost surely $X \in \calc^{\alpha}([0,T],V)$. There exist an orthonormal basis $\{ e_k \}_{k \in \bbn}$ of $V$ and a sequence $(\lambda_k)_{k \in \bbn} \subset (0,\infty)$ with $\sum_{k=1}^{\infty} \lambda_k < \infty$ such that
\begin{align*}
Q e_k = \lambda_k e_k \quad \text{for all $k \in \bbn$.}
\end{align*}
Furthermore, the $Q$-fractional Brownian motion $X$ admits the series representation
\begin{align}\label{series-Wiener}
X_t = \sum_{k=1}^{\infty} \sqrt{\lambda_k} \beta_t^k e_k, \quad t \in [0,T]
\end{align}
with a sequence of independent real-valued fractional Brownian motions $(\beta^k)_{k \in \bbn}$ with Hurst index $H$; see, for example  \cite{Grecksch-Anh}, \cite{Duncan-Maslowski} or \cite{Grecksch}.
By the series representation (\ref{series-Wiener}) we have
\begin{align}\label{norm-Wiener}
|X_{s,t}|^2 = \sum_{k=1}^{\infty} \lambda_k |\beta_{s,t}^k|^2, \quad s,t \in [0,T].
\end{align}
We wish to show that a typical sample path of $X$ is truly rough. For this purpose, we prepare the following auxiliary result.

\begin{lemma}\label{lemma-inf-positive}
We have $\inf_{|v| = 1} \la Qv,v \ra > 0$.
\end{lemma}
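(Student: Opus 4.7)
The plan is to invoke the spectral theorem for $Q$. Since $Q$ is nuclear, it is compact, and combined with self-adjointness this gives an orthonormal basis $\{e_k\}_{k \in \bbn}$ of $V$ consisting of eigenvectors of $Q$ with real eigenvalues $\lambda_k$, i.e.\ $Q e_k = \lambda_k e_k$. Positive definiteness forces every $\lambda_k$ to be strictly positive, since $\lambda_k = \la Q e_k, e_k \ra > 0$. For an arbitrary unit vector $v = \sum_{k} a_k e_k$ with $\sum_k a_k^2 = 1$, Parseval gives the identity
\[
\la Q v, v \ra = \sum_{k=1}^{\infty} \lambda_k a_k^2,
\]
which is bounded below by $(\inf_k \lambda_k) \sum_k a_k^2 = \inf_k \lambda_k$. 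Taking infimum over the unit sphere then yields $\inf_{|v|=1} \la Qv,v\ra \geq \inf_k \lambda_k$.

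The substantive step is therefore to verify that $\inf_k \lambda_k > 0$. This is what I expect to be the main obstacle, because for a generic trace-class positive operator the eigenvalues tend to zero and the naive bound degenerates. My expectation is that the hypothesis $Q \in L_1^{++}(V)$, which is presumably defined in the preliminaries to mean trace-class together with a strict lower bound $\la Qv,v\ra \geq c|v|^2$ on the spectrum (equivalently, the existence of a bounded inverse $Q^{-1}$), is precisely what rules this out. Under that reading the bound $\inf_k \lambda_k \geq c > 0$ is immediate, and in fact the short route is to write $\inf_{|v|=1} \la Qv,v\ra = 1/\|Q^{-1}\|$ directly from self-adjointness.

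If instead the reader is to extract the lower bound without using such a hypothesis on $Q^{-1}$, I would try a compactness argument: pick a minimizing sequence $v_n$ with $|v_n|=1$ and $\la Q v_n, v_n\ra \to \inf$, set $w_n := Q^{1/2} v_n$ (well-defined since $Q \geq 0$), and note $|w_n|^2 \to \inf$. Since $Q^{1/2}$ is compact we can extract a strongly convergent subsequence of $w_n$, and reconstruct a candidate minimizer via the spectral decomposition. The delicate point is preserving $|v_n|=1$ in the limit, which fails for weak limits in infinite dimensions; this is the structural reason the bound ultimately must come from the definition of $L_1^{++}(V)$ rather than from compactness alone.
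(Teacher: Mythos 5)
Your spectral reduction is correct and, in fact, sharp: writing $v = \sum_k a_k e_k$ with $\sum_k a_k^2 = 1$ gives $\la Qv,v \ra = \sum_k \lambda_k a_k^2$, and testing against $v = e_n$ shows that $\inf_{|v|=1} \la Qv,v \ra$ is not merely bounded below by $\inf_k \lambda_k$ but equal to it. You are also right that this is where the argument must live or die. Since $Q$ is nuclear we have $\sum_k \lambda_k < \infty$, so if $V$ is infinite dimensional then $\lambda_k \to 0$ and the infimum is $0$. Your fallback hope that $L_1^{++}(V)$ secretly encodes a bounded inverse cannot rescue this: a trace-class self-adjoint operator satisfying $\la Qv,v \ra \geq c|v|^2$ with $c>0$ has all eigenvalues $\geq c$, which together with summability of the eigenvalues forces $\dim V < \infty$. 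So your proposal does not close the proof, but the obstruction you isolated is genuine rather than a defect of your argument.

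The paper's own proof takes a different route: it shows that $\Phi(v) := \la Qv,v \ra$ is weakly sequentially continuous --- which is correct, and is essentially your dominated-convergence computation --- and then concludes by asserting that the unit sphere is weakly compact. That assertion is exactly the ``delicate point'' you flagged: in an infinite-dimensional Hilbert space the unit sphere is not weakly closed (its weak closure is the whole closed unit ball, since $e_n \rightharpoonup 0$), hence not weakly compact, and the weakly continuous $\Phi$ attains its minimum over the weakly compact unit \emph{ball} at $v=0$ with value $0$. The sequence $v_n = e_n$, for which $\Phi(e_n) = \lambda_n \to 0$, shows that the stated infimum is $0$ whenever $V$ is infinite dimensional. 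In short: you did not complete the proof, but you correctly diagnosed why it cannot be completed under the stated hypotheses; the lemma holds only when $\dim V < \infty$ (where sphere compactness is available in the norm topology), and the paper's argument fails at precisely the step you predicted. Note that this matters downstream: Proposition \ref{prop-truly-rough} invokes the constant $c = \inf_{|v|=1} \la Qv,v \ra$, and with $c=0$ the uniform lower bound $\sqrt{\la Qv,v \ra} \geq \sqrt{c}$ used there becomes vacuous, so that argument would need to be rerun with the pointwise bound $\la Qv,v \ra > 0$ for each fixed $v \neq 0$ instead.
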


\begin{proof}
We will show that the mapping $\Phi : V \to \bbr_+$, $\Phi(v) := \la Qv,v \ra$ is weakly continuous. Indeed, let $(v_n)_{n \in \bbn} \subset V$ and $v \in V$ be such that $\la v_n,w \ra \to \la v,w \ra$ for each $w \in V$. Since the sequence $(v_n)_{n \in \bbn}$ is bounded, there is a constant $M > 0$ such that $|v_n| \leq M$ for all $n \in \bbn$. Therefore, we have
\begin{align*}
| \la v_n,e_k \ra | \leq |v_n| \, |e_k| \leq M \quad \text{for all $n,k \in \bbn$.}
\end{align*}
Therefore, and since $\sum_{k=1}^{\infty} \lambda_k < \infty$, by Lebesgue's dominated convergence theorem we obtain for $n \to \infty$ the convergence
\begin{align*}
\Phi(v_n) = \la Q v_n, v_n \ra = \sum_{k=1}^{\infty} \lambda_k | \la v_n,e_k \ra |^2 \to \sum_{k=1}^{\infty} \lambda_k | \la v,e_k \ra |^2 = \la Qv,v \ra = \Phi(v),
\end{align*}
showing the weak continuity of $\Phi$. Since the unit sphere $\{ v \in V : |v| = 1 \}$ is weakly compact, the proof is complete.
\end{proof}

Now, we can generalize \cite[Example 2]{Friz-Shekhar} as follows. The proof is analogous.

\begin{proposition}\label{prop-truly-rough}
The $Q$-fractional Brownian motion $X$ is truly rough. More precisely, for each $\alpha \in (\frac{1}{3},H)$ there is a dense subset $D \subset [0,T)$ such that $\bbp$-almost surely
\begin{align*}
\limsup_{t \downarrow s} \frac{|\la v,X_{s,t} \ra|}{|t-s|^{2\alpha}} = \infty \quad \text{for all $s \in D$  and all $v \in V \setminus \{ 0 \}$.}
\end{align*}
\end{proposition}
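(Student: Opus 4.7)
\emph{Proof plan.} Following the strategy of the analogous finite-dimensional result in \cite[Example 2]{Friz-Shekhar}, I will fix the dense subset $D:=\bbq\cap[0,T)$ of $[0,T)$ and reduce by countable $\sigma$-subadditivity to the following claim: for every fixed $s\in D$, $\bbp$-almost surely the displayed limsup equals $+\infty$ simultaneously for all $v\in V\setminus\{0\}$. For such a fixed $s$, I work with the shifted process $Y_u:=X_{s,s+u}$, $u\in[0,T-s]$, which inherits from \eqref{cov-fct-0} the structure of a $Q$-fractional Brownian motion with Hurst index $H$, and I use the classical LIL normalization $\phi(u):=u^H\sqrt{2\log\log(1/u)}$.

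The pointwise engine is the classical Khintchine LIL. For any fixed $v\in V\setminus\{0\}$ the real-valued centered Gaussian process $(\la v,Y_u\ra)_{u\ge 0}$ has, by \eqref{cov-fct}, the same covariance as $\sqrt{\la Qv,v\ra}$ times a standard real-valued fBM of Hurst index $H$. Hence the LIL for real fBM yields
\[
\limsup_{u\downarrow 0}\frac{|\la v,Y_u\ra|}{\phi(u)}=\sqrt{\la Qv,v\ra}\ge\sqrt{c}\,|v|>0\quad\bbp\text{-a.s.,}
\]
with $c:=\inf_{|w|=1}\la Qw,w\ra>0$ by Lemma~\ref{lemma-inf-positive}. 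Because $\alpha>\tfrac13\ge\tfrac H2$ (using $H\le\tfrac12$) we have $2\alpha>H$, so $\phi(u)/u^{2\alpha}=u^{H-2\alpha}\sqrt{2\log\log(1/u)}\to\infty$, which upgrades the bound to $\limsup_{u\downarrow 0}|\la v,Y_u\ra|/u^{2\alpha}=+\infty$ for that single $v$.

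The decisive step is to pass from this pointwise-in-$v$ almost sure assertion to one holding uniformly in $v$ on a single event of full probability. My intended tool is Strassen's functional LIL for the $V$-valued Gaussian process $Y$: $\bbp$-almost surely, the set of cluster points of $\{Y_u/\phi(u)\}_{u\downarrow 0}$ in $V$ equals the Cameron--Martin unit ball $K:=\{Q^{1/2}w:w\in V,\;|w|\le 1\}$. On this Strassen event, for every $v\in V\setminus\{0\}$ one has
\[
\limsup_{u\downarrow 0}\frac{|\la v,Y_u\ra|}{\phi(u)}\;\ge\;\sup_{k\in K}|\la v,k\ra|\;=\;|Q^{1/2}v|\;=\;\sqrt{\la Qv,v\ra}\;>\;0,
\]
again by Lemma~\ref{lemma-inf-positive}. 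Multiplying through by $\phi(u)/u^{2\alpha}\to\infty$ yields the claimed blow-up simultaneously for all nonzero $v$, and a final countable intersection over $s\in D$ concludes the proof.

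The main technical obstacle is the invocation of Strassen's LIL for infinite-dimensional $Q$-fBM with $H\in(\tfrac13,\tfrac12]$: while it is classical for $Q$-Wiener processes (Kuelbs' theorem) and for finite-dimensional fBM, its Gaussian Banach-space version at fractional Hurst index may require a short derivation rather than a clean citation. If that proved inconvenient, I would replace it by a pedestrian substitute: pick a countable dense sequence $\{v_n\}\subset V\setminus\{0\}$ and apply the pointwise LIL to each $v_n$; separately establish the a.s.\ upper bound $\limsup_{u\downarrow 0}|Y_u|/\phi(u)<\infty$ via Fernique's theorem combined with a Borel--Cantelli argument along a dyadic time grid and H\"older interpolation between dyadic times; and, for arbitrary $v\neq 0$, choose $v_n$ so close to $v$ that the triangle-inequality loss $|v-v_n|\cdot|Y_u|$ is absorbed on the $\phi(u)$-scale by the strict positive LIL lower bound for $\la v_n,Y_u\ra$, after which dividing by $u^{2\alpha}$ produces the required divergence.
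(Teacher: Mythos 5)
Your proposal is correct in substance, and your fallback route is in fact the paper's route with the cited black box unpacked: the paper establishes exactly your two inputs --- the one-dimensional law of the iterated logarithm for $\la v,X\ra/\sqrt{\la Qv,v\ra}$ giving the lower bound $\sqrt{c}\,|v|$ with $c=\inf_{|w|=1}\la Qw,w\ra>0$ from Lemma \ref{lemma-inf-positive}, together with an almost sure upper bound $\limsup_{t\downarrow s}|X_{s,t}|/\psi(t-s)\leq\sqrt{\tr(Q)}$ --- and then invokes \cite[Thm.~2]{Friz-Shekhar} to perform precisely the uniformization over $v\in V\setminus\{0\}$ and over the dense set $D$ that you carry out by hand via the countable dense family $\{v_n\}$ and the triangle-inequality absorption argument. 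The only genuine differences are: (a) the paper obtains the upper bound from the series representation (\ref{norm-Wiener}) and the termwise LIL for the $\beta^k$, whereas you propose Fernique/Borell--TIS plus Borel--Cantelli on a geometric grid (both work; the paper's interchange of $\limsup$ and the infinite sum is itself a reverse-Fatou step that deserves a word of justification, so neither route is strictly cleaner); and (b) your primary route via Strassen's functional LIL is more elegant but, as you rightly flag, rests on a cluster-set identification for Hilbert-space-valued $Q$-fBM at Hurst index $H\in(\frac13,\frac12]$ for which no off-the-shelf citation is given --- in a final write-up you should either supply that derivation or simply cite \cite[Thm.~2]{Friz-Shekhar} as the paper does, since your two established inputs are exactly its hypotheses.
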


\begin{proof}
We consider the mapping $\psi : (0,\frac{1}{3}) \to (0,\infty)$ defined as
\begin{align*}
\psi(h) := h^H \sqrt{ 2 \ln \ln \Big( \frac{1}{h} \Big) }.
\end{align*}
Let $s \in [0,T)$ be arbitrary. By the law of iterated logarithm for one-fractional Brownian motion (see \cite{Arcones} and \cite[Remark 2.3.3]{Viitasaari}), for each $k \in \bbn$ there exists a $\bbp$-nullset $N_k$ such that
\begin{align*}
\limsup_{t \downarrow s} \frac{|\beta_{s,t}^k|}{\psi(t-s)} = 1 \quad \text{on $N_k^c$.}
\end{align*}
Therefore, defining the $\bbp$-nullset $N := \bigcup_{k \in \bbn} N_k$, by (\ref{norm-Wiener}) we obtain
\begin{align*}
\bigg( \limsup_{t \downarrow s} \frac{|X_{s,t}|}{\psi(t-s)} \bigg)^2 \leq \sum_{k=1}^{\infty} \lambda_k \limsup_{t \downarrow s} \frac{|\beta_{s,t}^k|^2}{\psi(t-s)^2} = \tr(Q) \quad \text{on $N^c$.}
\end{align*}
Now, we set $c := \inf_{|v| = 1} \la Qv,v \ra$. By Lemma \ref{lemma-inf-positive} we have $c > 0$. Let $v \in V$ with $|v| = 1$ be arbitrary. By (\ref{cov-fct}) the process
\begin{align*}
Y := \frac{\la v,X \ra}{\sqrt{\la Qv,v \ra}}
\end{align*}
is a real-valued fractional Brownian motion with Hurst index $H$. Therefore, by the law of iterated logarithm for one-dimensional fractional Brownian motion we obtain $\bbp$-almost surely
\begin{align*}
\limsup_{t \downarrow s} \frac{|\la v, X_{s,t} \ra|}{\psi(t-s)} = \sqrt{\la Qv,v \ra} \, \limsup_{t \downarrow s} \frac{|Y_{s,t}|}{\psi(t-s)} = \sqrt{\la Qv,v \ra} \geq \sqrt{c}.
\end{align*}
Consequently, applying \cite[Thm. 2]{Friz-Shekhar} completes the proof.
\end{proof}

For the first application of our invariance result we choose the Hurst index $H = \frac{1}{2}$. Then $X$ is a $Q$-Wiener process; see, for example \cite{Atma-book, Da_Prato, Liu-Roeckner}. We consider the It\^{o}-enhanced $Q$-Wiener process $\mathbf{X} = (X,\bbx)$, where the L\'{e}vy area is given by
\begin{align*}
\bbx_{s,t} &= \sum_{j,k=1}^{\infty} \sqrt{\lambda_j \lambda_k} \bigg( \int_s^t \beta_{s,r}^j d \beta_r^k \bigg) (e_j \otimes e_k), \quad s,t \in [0,T].
\end{align*}
For the following result, we refer to \cite[Prop. 10.9]{Tappe-rough}.

\begin{proposition}\label{prop-Wiener-rough-path}
For each $\alpha \in (\frac{1}{3},\frac{1}{2})$ we have $\bbp$-almost surely $\mathbf{X} \in \scrc^{\alpha}([0,T],V)$.
\end{proposition}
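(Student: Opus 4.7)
The plan is to verify the two defining conditions of the rough path space $\scrc^{\alpha}([0,T],V)$ for the pair $\mathbf{X}=(X,\bbx)$: first, that $X$ admits an $\alpha$-Hölder continuous modification with values in $V$, and second, that $\bbx$ is a well-defined $V\otimes V$-valued two-parameter process which is $2\alpha$-Hölder continuous on the simplex and satisfies Chen's relation $\bbx_{s,t}-\bbx_{s,u}-\bbx_{u,t}=X_{s,u}\otimes X_{u,t}$. Both will be produced through Gaussian moment estimates combined with the Kolmogorov continuity criterion.

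For the Hölder regularity of $X$, the series representation (\ref{series-Wiener}) together with the independence of the one-dimensional Brownian motions $\beta^k$ yields $\bbe|X_{s,t}|^2=\tr(Q)\,|t-s|$; since $X_{s,t}$ lies in the first Wiener chaos, all $L^p$-moments are equivalent, giving $\bbe|X_{s,t}|^p\lesssim |t-s|^{p/2}$ for every $p\ge 2$. Kolmogorov's continuity theorem then produces an $\alpha$-Hölder modification for each $\alpha<1/2$. For $\bbx$, I would first work with the finite truncations
\begin{equation*}
\bbx^N_{s,t}:=\sum_{j,k=1}^N\sqrt{\lambda_j\lambda_k}\bigg(\int_s^t\beta^j_{s,r}\,d\beta^k_r\bigg)(e_j\otimes e_k),
\end{equation*}
which are well-defined Itô integrals, and show that the sequence is Cauchy in $L^2(\Omega;V\otimes V)$. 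By orthogonality of the $\beta^k$'s and the Itô isometry, each off-diagonal term $j\neq k$ contributes $\tfrac12|t-s|^2$ and each diagonal term $j=k$ (computable via Itô's formula) contributes at most the same order; summing over the tails with the coefficients $\sqrt{\lambda_j\lambda_k}$ and invoking $\sum_k\lambda_k=\tr(Q)<\infty$ gives the Cauchy property and the limiting bound $\bbe|\bbx_{s,t}|_{V\otimes V}^2\lesssim |t-s|^2$.

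Since each coordinate of $\bbx_{s,t}$ lies in a fixed (at most second) Wiener chaos, hypercontractivity upgrades this to $\bbe|\bbx_{s,t}|^p\lesssim |t-s|^p$ for every $p\ge 2$, and the two-parameter Kolmogorov criterion tailored to rough paths then delivers a $2\alpha$-Hölder continuous modification for any $\alpha<1/2$. Chen's relation is immediate for each $\bbx^N_{s,t}$ by the additivity of the Itô integral in the outer variable, and passes to the $L^2$-limit. The main technical obstacle is the careful handling of the infinite sum defining $\bbx$: one must control the diagonal terms $j=k$, where the integrand is not independent of the integrator, separately from the off-diagonal terms, and exhibit a bound that sums under the sole assumption $\sum_k\lambda_k<\infty$; this is precisely where the trace-class property of $Q$ enters decisively, and it is the reason why the cleaner $Q$-Wiener setting ($H=1/2$) admits such an enhancement whereas the fractional case requires the separate treatment carried out in the next part of the paper.
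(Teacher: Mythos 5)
Your argument is correct, but note that the paper does not prove this proposition at all: it simply defers to \cite[Prop.~10.9]{Tappe-rough}. Your sketch is the standard route one would expect that reference to follow --- Gaussian/hypercontractive moment bounds ($\bbe|X_{s,t}|^2=\tr(Q)|t-s|$ and $\bbe|\bbx_{s,t}|^2\le\tfrac12\tr(Q)^2|t-s|^2$, the latter via the orthonormality of $e_j\otimes e_k$ so that diagonal and off-diagonal iterated integrals each contribute $\tfrac12|t-s|^2$) combined with the Kolmogorov criterion for rough paths and Chen's relation passing to the $L^2$-limit --- and all the steps check out; the only points worth tightening in a full write-up are the distinction between $X$ and its Hölder modification and the verification that Chen's relation, established first for fixed rational triples, extends to all $s,u,t$ by continuity.
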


Now, we choose an index $\alpha \in (\frac{1}{3},\frac{1}{2})$. By Proposition \ref{prop-truly-rough} and Proposition \ref{prop-Wiener-rough-path} there is a $\bbp$-nullset $N$ such that $\mathbf{X} = (X,\bbx) \in \scrc^{\alpha}([0,T],V)$ and $X$ is truly rough on $N^c$. As an immediate consequence of Theorem \ref{thm-main} and Remark \ref{rem-main} we obtain the following result.

\begin{corollary}\label{cor-random-RDE-Ito}
The following statements are equivalent:
\begin{enumerate}
\item[(i)] The submanifold $\calm$ is locally invariant for the random RDE (\ref{RDE}) on $N^c$.

\item[(ii)] For each $y \in \calm$ we have
\begin{align*}
f_0(y) - \frac{1}{2} \sum_{k=1}^{\infty} \lambda_k D f_k(y) f_k(y) &\in T_y \calm,
\\ f(y)v &\in T_y \calm, \quad v \in V.
\end{align*}
\end{enumerate}
If the two equivalent statements (i) and (ii) are fulfilled and the submanifold $\calm$ is closed as a subset of $W$, then $\calm$ is even globally invariant on $N^c$.
\end{corollary}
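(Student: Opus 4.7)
The plan is to apply Theorem \ref{thm-main} pathwise on $N^c$ and translate its tangent conditions by Remark \ref{rem-main}. Fix $\omega \in N^c$: Proposition \ref{prop-Wiener-rough-path} gives $\mathbf{X}(\omega) \in \scrc^{\alpha}([0,T],V)$ and Proposition \ref{prop-truly-rough} gives true roughness of $X(\omega)$, so the only nontrivial hypothesis of Theorem \ref{thm-main} left to check is that $[\mathbf{X}]_t = xt$ for some symmetric $x \in V \otimes V$, together with the explicit identification of this $x$.

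For the bracket identification I would argue componentwise from the explicit series for $\bbx_{s,t}$. The one-dimensional It\^o product rule yields, for each pair $j,k \in \bbn$,
\begin{equation*}
\int_s^t \beta_{s,r}^j \, d\beta_r^k + \int_s^t \beta_{s,r}^k \, d\beta_r^j = \beta_{s,t}^j \beta_{s,t}^k - \delta_{jk}(t-s).
\end{equation*}
Summing against $\sqrt{\lambda_j \lambda_k}(e_j \otimes e_k)$ and using the series (\ref{series-Wiener}) to rewrite $X_{s,t} \otimes X_{s,t}$ produces the infinite-dimensional symmetrization identity
\begin{equation*}
2\,\Sym(\bbx_{s,t}) = X_{s,t} \otimes X_{s,t} - (t-s)\sum_{k=1}^{\infty} \lambda_k(e_k \otimes e_k).
\end{equation*}
Because $\sum_k \lambda_k = \tr(Q) < \infty$, the right-hand sum defines a bona fide symmetric element $x := \sum_k \lambda_k(e_k \otimes e_k) \in V \otimes V$, namely the canonical image of $Q$, and hence $[\mathbf{X}]_t = xt$ in the sense required.

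With $x$ in hand, Theorem \ref{thm-main} applied on $N^c$ yields the equivalence between local invariance and the tangent conditions (\ref{tang-drift})--(\ref{tang-vol}). Remark \ref{rem-main} then rewrites (\ref{tang-drift}) as the series expression in (ii), since the orthonormal expansion of $x$ is precisely the one singled out there; condition (\ref{tang-vol}) is already the second line of (ii). The global invariance claim for closed $\calm$ is inherited directly from the additional statement of Theorem \ref{thm-main}. The main hurdle I anticipate is a careful justification of the termwise manipulations in the bracket identity: one must verify convergence of the relevant double series in $V \otimes V$ and the exchangeability of limits with It\^o integrals. Both points reduce to the summability $\sum_k \lambda_k < \infty$ guaranteed by $Q \in L_1^{++}(V)$, in the spirit of the dominated-convergence argument used to prove Lemma \ref{lemma-inf-positive}.
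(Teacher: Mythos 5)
Your proposal is correct and follows the same route the paper takes: the paper states this corollary as an immediate consequence of Theorem \ref{thm-main} and Remark \ref{rem-main}, applied pathwise on $N^c$ using Propositions \ref{prop-truly-rough} and \ref{prop-Wiener-rough-path}. The only part you add explicitly is the identification $[\mathbf{X}]_t = t\sum_{k}\lambda_k(e_k\otimes e_k)$ via the It\^{o} product rule, which the paper leaves implicit (it is essentially contained in the cited construction of the It\^{o}-enhanced $Q$-Wiener process), and your computation of it is correct.
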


\begin{remark}
In the situation of Corollary \ref{cor-random-RDE-Ito} we obtain a flow on the submanifold $\calm$, given by the solutions $Y^{\xi}(\omega)$ for $\xi \in \calm$ and $\omega \in N^c$.
\end{remark}

\begin{remark}
If $W$ is also a separable Hilbert space, then there is a coincidence of the rough integral and the It\^{o} integral; see \cite[Prop. 10.12]{Tappe-rough}. Hence, in this situation Corollary \ref{cor-random-RDE-Ito} also applies to stochastic partial differential equation (SPDEs) and confirms the findings about invariant manifolds for It\^{o} SPDEs; see, for example \cite{Milian-manifold} for finite dimensional equations and \cite{Filipovic-inv, Nakayama} for infinite dimensional equations.
\end{remark}

For the second application of our invariance result we consider a general $Q$-fractional Brownian motion $X$ with some Hurst index $H \in (\frac{1}{3},\frac{1}{2}]$. For the following result see \cite[Prop. 11.2]{Tappe-rough}. An essential part of the statement is a consequence of \cite[Lemma 2.4]{Hesse-Neamtu-local}.

\begin{proposition}\label{prop-frac-rough-path}
There exists a L\'{e}vy area $\bbx$ such that $\bbp$-almost surely $\mathbf{X} := (X,\bbx) \in \scrc_g^{\alpha}([0,T],V)$ for each $\alpha \in (\frac{1}{3},H)$.
\end{proposition}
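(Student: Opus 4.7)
The plan is to build the L\'evy area componentwise using the series expansion (\ref{series-Wiener}) and then assemble it via the tensor basis. First I would invoke \cite[Lemma 2.4]{Hesse-Neamtu-local} (or an equivalent Coutin--Qian style dyadic construction) to obtain, for every pair $(j,k)$, a well-defined iterated-integral process $\mathbb{B}_{s,t}^{j,k}$ satisfying Chen's relation
\begin{align*}
\mathbb{B}_{s,u}^{j,k} - \mathbb{B}_{s,t}^{j,k} - \mathbb{B}_{t,u}^{j,k} = \beta_{s,t}^j \beta_{t,u}^k
\end{align*}
together with the geometric symmetry $\mathbb{B}_{s,t}^{j,k} + \mathbb{B}_{s,t}^{k,j} = \beta_{s,t}^j \beta_{s,t}^k$. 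The standard route passes through dyadic piecewise-linear approximations of the pair $(\beta^j,\beta^k)$, whose canonical smooth iterated integrals are shown to converge in H\"older rough-path metrics; independence of the $\beta^k$ for $j \neq k$ and Gaussianity provide the required moment estimates.

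Next I would define
\begin{align*}
\bbx_{s,t} := \sum_{j,k=1}^{\infty} \sqrt{\lambda_j \lambda_k}\, \mathbb{B}_{s,t}^{j,k}\, (e_j \otimes e_k),
\end{align*}
understood in the Hilbert--Schmidt tensor space $V \otimes V$. Using that $\{ e_j \otimes e_k \}_{j,k \in \bbn}$ is an orthonormal basis and that $\bbe |\mathbb{B}_{s,t}^{j,k}|^2 \lesssim |t-s|^{4H}$ uniformly in $(j,k)$ by Gaussian scaling, one obtains
\begin{align*}
\bbe \, \| \bbx_{s,t} \|_{V \otimes V}^2 = \sum_{j,k=1}^{\infty} \lambda_j \lambda_k\, \bbe\, |\mathbb{B}_{s,t}^{j,k}|^2 \lesssim |t-s|^{4H} \bigg( \sum_{k=1}^{\infty} \lambda_k \bigg)^{\!2},
\end{align*}
which is finite by the trace-class condition $\sum_k \lambda_k < \infty$. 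A Kolmogorov--Chentsov argument, made quantitative via equivalence of Gaussian $L^p$-norms, then upgrades this to an almost sure $(2\alpha)$-H\"older bound on the 2-simplex, simultaneously for every $\alpha \in (\tfrac{1}{3},H)$ on a single full-probability event.

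It then remains to transfer the componentwise Chen relation and geometric identity to the assembled object: by linearity of the absolutely convergent double sum one gets $\bbx_{s,u} - \bbx_{s,t} - \bbx_{t,u} = X_{s,t} \otimes X_{t,u}$ and $\Sym(\bbx_{s,t}) = \frac{1}{2} X_{s,t} \otimes X_{s,t}$, whence $(X,\bbx) \in \scrc_g^{\alpha}([0,T],V)$.

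The main obstacle I anticipate is the uniform-in-$(s,t)$ control needed to pass from the pointwise $L^2$-estimate of the double sum to convergence in the $(2\alpha)$-H\"older topology on $\calc_2^{2\alpha}$, on a single event valid for all $\alpha \in (\frac{1}{3},H)$. Exploiting Gaussianity to boost moment estimates and then invoking Kolmogorov--Chentsov is the standard remedy, but requires careful tracking of the dependence on $H-\alpha$ as well as of the eigenvalue decay $\lambda_k$ so that the resulting tail estimates are summable.
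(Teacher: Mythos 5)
Your sketch is correct in outline, but note that the paper does not actually prove this proposition: it is established purely by citation to \cite[Prop.~11.2]{Tappe-rough}, with the remark that an essential part is a consequence of \cite[Lemma~2.4]{Hesse-Neamtu-local}. What you have written is essentially a reconstruction of the argument underlying those references: diagonalize $Q$, lift each pair $(\beta^j,\beta^k)$ of standard fractional Brownian motions (independent for $j\neq k$) to a geometric rough path via Coutin--Qian dyadic approximation (which needs $H>\tfrac14$, amply satisfied here), assemble the L\'evy area in the Hilbert--Schmidt tensor norm using the trace-class condition $\sum_k\lambda_k<\infty$, and upgrade the pointwise second-moment bound $\lesssim|t-s|^{4H}$ to almost sure $2\alpha$-H\"older regularity by second-chaos hypercontractivity and the Kolmogorov criterion for rough paths; Chen's relation and the geometric identity ${\rm Sym}(\bbx_{s,t})=\tfrac12 X_{s,t}\otimes X_{s,t}$ then pass to the limit by linearity. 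Two points you should make explicit if you flesh this out: (a) the uniformity in $(j,k)$ of $\bbe\,|\mathbb{B}^{j,k}_{s,t}|^2\lesssim|t-s|^{4H}$ holds because the normalized components are identically distributed (i.i.d.\ standard fBm pairs off the diagonal, and $\mathbb{B}^{j,j}_{s,t}=\tfrac12(\beta^j_{s,t})^2$ on it), and the Kolmogorov argument must be applied to the tails of the double series, not just to the full sum, to get convergence in the rough-path metric; (b) the single full-measure event valid for all $\alpha\in(\tfrac13,H)$ is obtained by intersecting over a sequence $\alpha_n\uparrow H$. An alternative packaging, and the one effectively behind \cite[Lemma~2.4]{Hesse-Neamtu-local}, is the Friz--Victoir Gaussian rough path theory, which constructs the lift from finite two-dimensional $\rho$-variation of the covariance and delivers the geometric property and all moment bounds in one stroke; your componentwise route is more elementary but forces you to track the eigenvalue decay by hand, which is exactly the obstacle you flag at the end.
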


\begin{remark}
If $H = \frac{1}{2}$, then $X$ is a $Q$-Wiener process, and $\mathbf{X} = (X,\bbx)$ according to Proposition \ref{prop-frac-rough-path} is a Stratonovich-enhanced $Q$-Wiener process.
\end{remark}

By Proposition \ref{prop-truly-rough} and Proposition \ref{prop-frac-rough-path} there is a $\bbp$-nullset $N$ such that $\mathbf{X} = (X,\bbx) \in \scrc_g^{\alpha}([0,T],V)$ and $X$ is truly rough on $N^c$. As an immediate consequence of Corollary \ref{cor-main} we obtain the following result.

\begin{corollary}\label{cor-random-RDE-frac}
The following statements are equivalent:
\begin{enumerate}
\item[(i)] The submanifold $\calm$ is locally invariant for the random RDE (\ref{RDE}) on $N^c$.

\item[(ii)] For each $y \in \calm$ we have
\begin{align*}
f_0(y) &\in T_y \calm,
\\ f(y)v &\in T_y \calm, \quad v \in V.
\end{align*}
\end{enumerate}
If the two equivalent statements (i) and (ii) are fulfilled and the submanifold $\calm$ is closed as a subset of $W$, then $\calm$ is even globally invariant on $N^c$.
\end{corollary}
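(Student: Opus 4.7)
The plan is to derive this result by a pathwise application of Corollary \ref{cor-main}, leveraging the two propositions that immediately precede the statement. By Proposition \ref{prop-truly-rough} and Proposition \ref{prop-frac-rough-path} there exists a single $\bbp$-nullset $N \in \calf$ such that, for every $\omega \in N^c$, the enhanced path $\bfx(\omega) = (X(\omega),\bbx(\omega))$ belongs to $\scrc_g^{\alpha}([0,T],V)$ and the first level $X(\omega)$ is truly rough. In particular, for each fixed $\omega \in N^c$ the deterministic driving signal $\bfx(\omega)$ satisfies all the hypotheses imposed in Corollary \ref{cor-main}.

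Given this, I would fix an arbitrary $\omega \in N^c$ and apply Corollary \ref{cor-main} to the deterministic RDE (\ref{RDE}) driven by $\bfx(\omega)$, with the same coefficients $f_0$ and $f$. This yields the equivalence of the pathwise local invariance of $\calm$ under $\bfx(\omega)$ with the tangency conditions $f_0(y) \in T_y \calm$ and $f(y)v \in T_y\calm$ for all $y \in \calm$ and $v \in V$. The crucial observation is that the tangency conditions in (ii) of Corollary \ref{cor-random-RDE-frac} are purely deterministic and do not depend on $\omega$; consequently they either hold universally or fail universally, which lets one transfer the pathwise equivalence into the random formulation. Local invariance of $\calm$ on $N^c$, by definition, means precisely that the deterministic statement holds for every $\omega \in N^c$, so (i) and (ii) are equivalent.

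For the additional global-invariance assertion, I would again argue pathwise: assuming (ii) and that $\calm$ is closed in $W$, Corollary \ref{cor-main} delivers global invariance of $\calm$ for the RDE driven by $\bfx(\omega)$ for each $\omega \in N^c$. Taking the union over $\omega \in N^c$ yields global invariance of $\calm$ for the random RDE (\ref{RDE}) on $N^c$.

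The proof is essentially a bookkeeping exercise; there is no genuine obstacle beyond checking that the exceptional sets from Proposition \ref{prop-truly-rough} and Proposition \ref{prop-frac-rough-path} can be combined into a single nullset $N$, which is automatic since both are countable unions handled in the respective proofs. The work has already been done in Theorem \ref{thm-main} and Corollary \ref{cor-main}; this corollary only packages those deterministic results in the random fractional-Brownian-motion setting.
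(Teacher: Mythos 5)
Your proposal is correct and follows exactly the route the paper takes: the corollary is stated there as an immediate consequence of Corollary \ref{cor-main}, applied pathwise on the complement of the single nullset $N$ obtained by combining Proposition \ref{prop-truly-rough} and Proposition \ref{prop-frac-rough-path}. The observation that condition (ii) is deterministic and hence $\omega$-independent is precisely what makes the pathwise transfer work, just as in the paper.
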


\begin{remark}
In the situation of Corollary \ref{cor-random-RDE-frac} we obtain a flow on the submanifold $\calm$, given by the solutions $Y^{\xi}(\omega)$ for $\xi \in \calm$ and $\omega \in N^c$.
\end{remark}

\begin{remark}
Note that Corollary \ref{cor-random-RDE-frac} is in line with invariance results for SPDEs driven by fractional Brownian motion; see, for example \cite{Ohashi}.
\end{remark}

\begin{appendix}

\section{Rough paths and rough differential equations}\label{app-rough}

In this appendix we provide the required results about rough paths and rough differential equations. For more details, we refer to \cite{Friz-Hairer}. Let $V, W$ be Banach spaces, and let $T \in \bbr_+$ be a time horizon. Furthermore, let $\bfx = (X,\bbx) \in \scrc^{\alpha}([0,T],V)$ be a rough path for some index $\alpha \in (\frac{1}{3},\frac{1}{2}]$. We recall that $\scrd_X^{2\alpha}([0,T],W)$ denotes the linear space of \emph{controlled rough paths} with values in $W$. For $(Y,Y') \in \scrd_X^{2\alpha}([0,T],W)$ the path $Y'$ is called a \emph{Gubinelli derivative} of $Y$ (with respect to $X$). The following straightforward result will be useful.

\begin{lemma}\label{lemma-reg-derivative-zero}
For each $Y \in \calc^{2 \alpha}([0,T],W)$ we have $(Y,0) \in \scrd_X^{2 \alpha}([0,T],W)$.
\end{lemma}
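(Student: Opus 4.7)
The plan is to unravel the definition of $\scrd_X^{2\alpha}([0,T],W)$ and verify the three conditions directly for the candidate pair $(Y,0)$. Recall that $(Y,Y') \in \scrd_X^{2\alpha}([0,T],W)$ means $Y \in \calc^{\alpha}([0,T],W)$, $Y' \in \calc^{\alpha}([0,T],L(V,W))$, and the remainder $R^Y_{s,t} := Y_{s,t} - Y'_s X_{s,t}$ lies in $\calc_2^{2\alpha}([0,T],W)$ (i.e.\ has finite $2\alpha$-Hölder seminorm as a two-parameter increment).

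First, the Gubinelli derivative is chosen as $Y' \equiv 0$, which is trivially in $\calc^{\alpha}([0,T],L(V,W))$ since its $\alpha$-Hölder seminorm vanishes. Second, with this choice the remainder reduces to $R^Y_{s,t} = Y_{s,t}$, and the assumption $Y \in \calc^{2\alpha}([0,T],W)$ gives directly that $\|R^Y\|_{2\alpha} = \|Y\|_{2\alpha} < \infty$. Third, we must verify that $Y$ itself belongs to $\calc^{\alpha}([0,T],W)$; this is immediate on the bounded interval $[0,T]$ from the elementary embedding $\calc^{2\alpha} \hookrightarrow \calc^{\alpha}$, valid because $\alpha \le 2\alpha$ and $|t-s| \le T$ implies $|t-s|^{2\alpha} \le T^{\alpha}|t-s|^{\alpha}$.

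There is no real obstacle here; the statement is essentially a definitional check. The only step worth mentioning explicitly is the Hölder embedding $\calc^{2\alpha} \hookrightarrow \calc^{\alpha}$ on $[0,T]$, which ensures that $Y$ satisfies the $\alpha$-regularity requirement built into the definition of a controlled rough path. Assembling these three points yields $(Y,0) \in \scrd_X^{2\alpha}([0,T],W)$, completing the proof.
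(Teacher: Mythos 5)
Your proof is correct and is exactly the definitional check the paper has in mind: the lemma is stated without proof as a ``straightforward result,'' and your verification (taking $Y'\equiv 0$, observing $R^Y_{s,t}=Y_{s,t}$ with $\|Y\|_{2\alpha}<\infty$, and noting the embedding $\calc^{2\alpha}\hookrightarrow\calc^{\alpha}$ on the bounded interval $[0,T]$) is the intended argument. Nothing is missing.
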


Now, let $(Y,Y') \in \scrd_X^{2 \alpha}([0,T],L(V,W))$ be a controlled rough path. Noting the canonical injection $L(V,L(V,W)) \hookrightarrow L(V \otimes V,W)$, we define the $W$-valued rough integral
\begin{align*}
\int_0^1 Y_s \, d \bfx_s := \lim_{|\Pi| \to 0} \sum_{[s,t] \in \Pi} ( Y_s X_{s,t} + Y_s' \bbx_{s,t} ).
\end{align*}

\begin{lemma}\cite[Thm. 4.10]{Friz-Hairer}\label{lemma-Gubinelli}
Let $(Y,Y') \in \scrd_X^{2 \alpha}([0,T],L(V,W))$ be a controlled rough path. Then we have
\begin{align*}
\bigg( \int_0^{\cdot} Y_s \, d \bfx_s, Y \bigg) \in \scrd_X^{2 \alpha}([0,T],W).
\end{align*}
\end{lemma}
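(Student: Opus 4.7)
The plan is to verify the two defining properties of a controlled rough path: namely, that $I := \int_0^\cdot Y_s\,d\bfx_s$ lies in $\calc^\alpha([0,T],W)$, and that $Y$ is a Gubinelli derivative of $I$ with respect to $X$, i.e.\ that the remainder $R^I_{s,t} := I_{s,t} - Y_s X_{s,t}$ satisfies $|R^I_{s,t}| \lesssim |t-s|^{2\alpha}$. The natural route is via the sewing lemma applied to the local germ
\begin{align*}
\Xi_{s,t} := Y_s X_{s,t} + Y_s' \bbx_{s,t},
\end{align*}
which, in view of the embedding $L(V,L(V,W)) \hookrightarrow L(V \otimes V,W)$, takes values in $W$.

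First, I would check that $\Xi$ has finite $\alpha$-variation in the relevant sense. Since $Y$ is controlled with Gubinelli derivative $Y'$, the paths $Y$, $Y'$ are bounded on $[0,T]$, so $|\Xi_{s,t}| \leq \|Y\|_\infty \|X\|_\alpha |t-s|^\alpha + \|Y'\|_\infty \|\bbx\|_{2\alpha} |t-s|^{2\alpha}$, giving $|\Xi_{s,t}| \lesssim |t-s|^\alpha$. Next I would compute $\delta \Xi_{s,u,t} := \Xi_{s,t} - \Xi_{s,u} - \Xi_{u,t}$ using Chen's relation $X_{s,t} = X_{s,u} + X_{u,t}$ and $\bbx_{s,t} = \bbx_{s,u} + \bbx_{u,t} + X_{s,u} \otimes X_{u,t}$, together with the decomposition $Y_{s,u} = Y_s' X_{s,u} + R^Y_{s,u}$ from the controlled rough path structure. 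A short cancellation yields
\begin{align*}
\delta \Xi_{s,u,t} = - R^Y_{s,u} X_{u,t} - Y_{s,u}'\, \bbx_{u,t}.
\end{align*}
Since $\|R^Y\|_{2\alpha} < \infty$ and $\|Y'\|_\alpha < \infty$, the two terms are bounded respectively by $\|R^Y\|_{2\alpha} \|X\|_\alpha |u-s|^{2\alpha}|t-u|^\alpha$ and $\|Y'\|_\alpha \|\bbx\|_{2\alpha} |u-s|^\alpha |t-u|^{2\alpha}$, hence $|\delta \Xi_{s,u,t}| \lesssim |t-s|^{3\alpha}$ with $3\alpha > 1$.

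With this in hand, the sewing lemma produces a unique path $I \in \calc([0,T],W)$ with $I_0 = 0$ such that $|I_{s,t} - \Xi_{s,t}| \lesssim |t-s|^{3\alpha}$, and this limit coincides with the rough integral $\int_0^\cdot Y_s \, d\bfx_s$ as defined in the excerpt. From the bound $|I_{s,t} - \Xi_{s,t}| \lesssim |t-s|^{3\alpha}$ and the pointwise estimate for $\Xi$, it follows at once that $I \in \calc^\alpha([0,T],W)$. Moreover,
\begin{align*}
R^I_{s,t} = I_{s,t} - Y_s X_{s,t} = Y_s' \bbx_{s,t} + \bigl( I_{s,t} - \Xi_{s,t} \bigr),
\end{align*}
and both summands are of order $|t-s|^{2\alpha}$, so $\|R^I\|_{2\alpha} < \infty$. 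Hence $(I,Y) \in \scrd_X^{2\alpha}([0,T],W)$.

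The main obstacle is the algebraic identity for $\delta \Xi$: one needs to combine Chen's relation with the controlled-path decomposition cleanly so that precisely the two higher-order terms $R^Y_{s,u} X_{u,t}$ and $Y_{s,u}' \bbx_{u,t}$ survive. Once this cancellation is in place, the sewing lemma (which I would invoke as a black box) immediately delivers both the existence of the integral and the sharp remainder estimate.
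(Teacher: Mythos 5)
Your proof is correct and is precisely the standard sewing-lemma argument behind the cited result \cite[Thm.~4.10]{Friz-Hairer}; the paper itself gives no proof and simply quotes that theorem. The key cancellation $\delta \Xi_{s,u,t} = -R^Y_{s,u} X_{u,t} - Y'_{s,u}\,\bbx_{u,t}$ via Chen's relation and the controlled-path decomposition is exactly right, as are the resulting $3\alpha$- and $2\alpha$-estimates.
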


As an immediate consequence of Lemmas \ref{lemma-reg-derivative-zero} and \ref{lemma-Gubinelli} we obtain the following result.

\begin{corollary}\label{cor-Gubinelli-der-mixed}
Let $Z \in C([0,T],W)$ be a continuous path, and let $(Y,Y') \in \scrd_X^{2 \alpha}([0,T],L(V,W))$ be a controlled rough path. Then we have
\begin{align*}
\bigg( \int_0^{\cdot} Z_s ds + \int_0^{\cdot} Y_s \, d \bfx_s, Y \bigg) \in \scrd_X^{2 \alpha}([0,T],W).
\end{align*}
\end{corollary}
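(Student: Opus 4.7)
The plan is to observe that $\scrd_X^{2\alpha}([0,T],W)$ is a linear space, so it suffices to handle the two summands separately and then add the resulting controlled rough paths. The Gubinelli derivative of the rough integral part is $Y$ (by Lemma \ref{lemma-Gubinelli}), while the Gubinelli derivative of the Bochner integral part is $0$ (by Lemma \ref{lemma-reg-derivative-zero}), so the claimed Gubinelli derivative $Y$ of the sum comes out automatically.

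In detail, I would proceed as follows. First define the paths
\begin{align*}
A_t := \int_0^t Z_s \, ds, \qquad B_t := \int_0^t Y_s \, d\bfx_s, \qquad t \in [0,T].
\end{align*}
Since $Z \in C([0,T],W)$ and $[0,T]$ is compact, $Z$ is bounded, say by $M := \sup_{t \in [0,T]} |Z_t|$. Hence $|A_t - A_s| \leq M |t-s|$ for all $s,t \in [0,T]$, so $A$ is Lipschitz continuous. Because $2 \alpha \leq 1$, this yields $A \in \calc^{2\alpha}([0,T],W)$, and Lemma \ref{lemma-reg-derivative-zero} then gives $(A,0) \in \scrd_X^{2\alpha}([0,T],W)$. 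Next, Lemma \ref{lemma-Gubinelli} gives directly $(B,Y) \in \scrd_X^{2\alpha}([0,T],W)$. Adding these two controlled rough paths, which is allowed since $\scrd_X^{2\alpha}([0,T],W)$ is a linear space, produces
\begin{align*}
(A+B, 0+Y) = \bigg( \int_0^{\cdot} Z_s \, ds + \int_0^{\cdot} Y_s \, d\bfx_s, \, Y \bigg) \in \scrd_X^{2\alpha}([0,T],W),
\end{align*}
which is the claim.

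There is essentially no obstacle; the only substantive point is the trivial observation that the Lipschitz bound on $A$ upgrades to $2\alpha$-H\"older regularity because $2\alpha \in (\tfrac{2}{3},1]$. Everything else is linearity of the space of controlled rough paths and direct invocation of the two preceding lemmas, which is exactly why the author states the result as an \emph{immediate} consequence.
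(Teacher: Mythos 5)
Your proposal is correct and matches the paper's intended argument exactly: the paper states the corollary as an immediate consequence of Lemmas \ref{lemma-reg-derivative-zero} and \ref{lemma-Gubinelli}, which is precisely the decomposition into the Lipschitz (hence $2\alpha$-H\"older, since $2\alpha \leq 1$) Bochner integral with Gubinelli derivative $0$ and the rough integral with Gubinelli derivative $Y$, combined by linearity of $\scrd_X^{2\alpha}([0,T],W)$.
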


In general, the Gubinelli derivative $Y'$ of a controlled rough path does not need to be unique. However, if $X$ is truly rough, then we have uniqueness of the Gubinelli derivative as a consequence of the following result. Recall that a path $X \in \calc^{\alpha}([0,T], V)$ is called \emph{truly rough} if there is a dense subset $D \subset [0,T)$ such that
\begin{align*}
\limsup_{t \downarrow s} \frac{|\la v,X_{s,t} \ra|}{|t-s|^{2\alpha}} = \infty \quad \text{for all $s \in D$  and all $v \in V \setminus \{ 0 \}$.}
\end{align*}

\begin{proposition}\cite[Prop. 6.4]{Friz-Hairer}\label{prop-Gubinelli-unique}
Suppose that $X$ is truly rough, and let
\begin{align*}
(Y,Y'), (Y,\tilde{Y}') \in \scrd_X^{2\alpha}([0,T],L(V,W))
\end{align*}
be two controlled rough paths. Then we have $Y' = \tilde{Y}'$.
\end{proposition}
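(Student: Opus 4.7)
The plan is to set $R := Y' - \tilde Y'$ (a continuous path with values in $L(V,L(V,W))$) and to show $R \equiv 0$ on $[0,T]$. The starting point is the definition of a controlled rough path: both remainders
\[
R^Y_{s,t} := Y_{s,t} - Y'_s X_{s,t}, \qquad \tilde R^Y_{s,t} := Y_{s,t} - \tilde Y'_s X_{s,t}
\]
are of order $|t-s|^{2\alpha}$ in $L(V,W)$. Subtracting the two identities eliminates $Y_{s,t}$ and gives
\[
\|R_s X_{s,t}\|_{L(V,W)} = \|\tilde R^Y_{s,t} - R^Y_{s,t}\|_{L(V,W)} = O(|t-s|^{2\alpha}) \quad \text{as } t \downarrow s.
\]

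Next I would argue pointwise on the dense subset $D \subset [0,T)$ furnished by the truly rough hypothesis. Fix $s \in D$ and suppose for contradiction that $R_s \neq 0$. Then I can pick $w \in V$ with $R_s w \neq 0$ in $L(V,W)$, next $u \in V$ with $(R_s w) u \neq 0$ in $W$, and finally, by Hahn--Banach, $\ell \in W^*$ with $\ell((R_s w) u) \neq 0$. The functional $\phi : V \to \bbr$, $\phi(v) := \ell\bigl((R_s v) u\bigr)$, is bounded and non-zero, i.e.\ $\phi \in V^* \setminus \{0\}$. Evaluating it on the increment $X_{s,t}$ yields
\[
|\phi(X_{s,t})| \leq \|\ell\| \, \|u\|_V \, \|R_s X_{s,t}\|_{L(V,W)} = O(|t-s|^{2\alpha}),
\]
so $\limsup_{t \downarrow s} |\phi(X_{s,t})| / |t-s|^{2\alpha} < \infty$. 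This directly contradicts the truly rough property applied to $\phi$, forcing $R_s = 0$ for every $s \in D$. Since $R$ is continuous (both $Y'$ and $\tilde Y'$ are $\alpha$-Hölder) and $D$ is dense in $[0,T)$, the identity $R \equiv 0$ extends to all of $[0,T]$.

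The only real obstacle is the linear-functional step: one must convert the abstract non-vanishing of $R_s \in L(V, L(V,W))$ into a concrete non-zero element of the dual $V^*$ against which the truly rough condition can be tested. Everything else is essentially a routine tautology, since the rough-path information enters only through the trivial bound $\|R_s X_{s,t}\| = O(|t-s|^{2\alpha})$. In the notation $\langle v, X_{s,t} \rangle$ used in the paper, $v$ is interpreted as an element of $V^*$ in the Banach setting, or of $V$ itself under the Riesz identification in the Hilbert setting, and the functional $\phi$ constructed above plays that role.
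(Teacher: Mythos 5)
Your proof is correct and is essentially the argument behind the cited result: the paper states Proposition~\ref{prop-Gubinelli-unique} without proof, deferring to \cite[Prop.~6.4]{Friz-Hairer}, and your reduction --- subtracting the two remainder estimates to get $\|(Y'_s-\tilde Y'_s)X_{s,t}\|=O(|t-s|^{2\alpha})$ and then testing against a scalar functional $\phi\in V^*$ built via Hahn--Banach to contradict true roughness on the dense set $D$, followed by continuity of the Gubinelli derivatives --- is exactly how that proof goes. The only point worth flagging is the one you already address: the paper's phrase ``for all $v \in V \setminus \{0\}$'' in the definition of true roughness must be read as ranging over the dual space (or over $V$ via the Riesz identification in the Hilbert setting), and your construction of $\phi$ handles this correctly.
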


Moreover, for truly rough paths the following Doob-Meyer type result holds true.

\begin{theorem}\cite[Thm. 6.5]{Friz-Hairer}\label{thm-Doob-Meyer}
Suppose that $X$ is truly rough. Let
\begin{align*}
Z,\tilde{Z} \in C([0,T],W)
\end{align*}
be two continuous paths, and let
\begin{align*}
(Y,Y'), (\tilde{Y},\tilde{Y}') \in \scrd_X^{2\alpha}([0,T],L(V,W))
\end{align*}
be two controlled rough paths. If
\begin{align*}
\int_0^t Z_s ds + \int_0^t Y_s d \mathbf{X}_s = \int_0^t \tilde{Z}_s ds + \int_0^t \tilde{Y}_s d \mathbf{X}_s, \quad t \in [0,T],
\end{align*}
then we have $Z = \tilde{Z}$ and $(Y,Y') = (\tilde{Y},\tilde{Y}')$.
\end{theorem}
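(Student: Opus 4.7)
My approach would be to subtract the two sides of the identity and exploit the truly rough property through the local Taylor expansion of the rough integral. Setting $\Delta Z := Z - \tilde{Z}$, $\Delta Y := Y - \tilde{Y}$, $\Delta Y' := Y' - \tilde{Y}'$, linearity of the rough integral gives $(\Delta Y, \Delta Y') \in \scrd_X^{2\alpha}([0,T], L(V,W))$ and
$$\int_0^t \Delta Z_s\, ds + \int_0^t \Delta Y_s\, d\mathbf{X}_s = 0, \quad t \in [0,T].$$
The plan is to show, in order, $\Delta Y \equiv 0$, then $\Delta Y' \equiv 0$, then $\Delta Z \equiv 0$.

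Fix $s$ in the dense subset $D \subset [0,T)$ furnished by the truly rough property. Subtracting the identity at $s$ and $t > s$ and invoking the definition of the rough integral yields the local expansion
$$\int_s^t \Delta Y_r\, d\mathbf{X}_r = \Delta Y_s X_{s,t} + \Delta Y_s' \bbx_{s,t} + R_{s,t},$$
with sewing-lemma remainder satisfying $|R_{s,t}| \leq C|t-s|^{3\alpha}$, together with $|\bbx_{s,t}| \leq C|t-s|^{2\alpha}$. Since $\Delta Z$ is continuous on $[0,T]$ it is bounded, so $|\int_s^t \Delta Z_r\, dr| \leq C|t-s|$; because $\alpha > 1/3$ we have $3\alpha > 1 \geq 2\alpha$, and rearrangement of the identity gives
$$|\Delta Y_s X_{s,t}| \leq C |t-s|^{2\alpha}$$
for some finite constant independent of $t$.

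To extract $\Delta Y_s = 0$, I would pick an arbitrary $\varphi \in W^*$ and define $\psi_s \in V^*$ by $\psi_s(v) := \varphi(\Delta Y_s v)$. The preceding bound yields $\limsup_{t \downarrow s} |\la \psi_s, X_{s,t} \ra| / |t-s|^{2\alpha} < \infty$, which by the truly rough property forces $\psi_s = 0$. Since $\varphi$ was arbitrary, Hahn--Banach implies $\Delta Y_s v = 0$ for every $v \in V$, i.e.\ $\Delta Y_s = 0$; density of $D$ together with continuity of $\Delta Y$ then give $\Delta Y \equiv 0$ on $[0,T]$. Consequently $(0, \Delta Y') = (\Delta Y, \Delta Y')$ and $(0,0)$ are two controlled rough paths sharing the same first component (the latter belongs to $\scrd_X^{2\alpha}$ by Lemma \ref{lemma-reg-derivative-zero}), so Proposition \ref{prop-Gubinelli-unique} forces $\Delta Y' = 0$. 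Hence $\int_0^\cdot \Delta Y_s\, d\mathbf{X}_s \equiv 0$, the identity collapses to $\int_0^t \Delta Z_s\, ds = 0$, and differentiating in $t$ (legitimate by continuity of $\Delta Z$) yields $\Delta Z \equiv 0$.

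The main technical obstacle is the borderline regime $\alpha = \frac{1}{2}$, where the Lebesgue drift contribution $\int_s^t \Delta Z_r\, dr$ has exactly the order $|t-s|^{2\alpha} = |t-s|$ of the tolerance we need to absorb, so only an $O$-bound (not an $o$-bound) on $\Delta Y_s X_{s,t}$ is available. This is just enough because the truly rough property is a strong $\limsup = \infty$ statement rather than merely a positive-lower-bound statement; a weaker roughness notion would fail to close the argument. The sewing remainder $R_{s,t}$ is controlled by the gain $3\alpha > 1$, which is precisely the structural reason behind the standing assumption $\alpha > 1/3$.
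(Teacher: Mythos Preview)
The paper does not supply its own proof of this theorem; it is quoted verbatim from \cite[Thm.~6.5]{Friz-Hairer} and used as a black box. Your argument is correct and is essentially the standard proof given in that reference: reduce by linearity, read off $\Delta Y_s = 0$ on the dense set $D$ from the local expansion of the rough integral combined with the truly rough property, extend by continuity, invoke uniqueness of the Gubinelli derivative (Proposition~\ref{prop-Gubinelli-unique}) to get $\Delta Y' = 0$, and finally differentiate the residual Riemann integral to obtain $\Delta Z = 0$. Your handling of the borderline case $\alpha = \tfrac12$ is also correct: the drift contributes exactly at order $|t-s|^{2\alpha}$, but since truly rough demands $\limsup = \infty$ rather than merely a positive lower bound, the $O(|t-s|^{2\alpha})$ estimate on $\Delta Y_s X_{s,t}$ still forces the dual element to vanish.
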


Let $\bar{W}$ be another Banach space. For $(Y,Y') \in \scrd_X^{2 \alpha}([0,T],W)$ and a function $\varphi \in C_b^2(W,\bar{W})$ we denote by $f(Y) : [0,T] \to \bar{W}$ the path
\begin{align*}
f(Y)_t := f(Y_t), \quad t \in [0,T],
\end{align*}
and we denote by $f(Y)' : [0,T] \to L(V,\bar{W})$ the path
\begin{align*}
f(Y)_t' := Df(Y_t) Y_t', \quad t \in [0,T].
\end{align*}

\begin{lemma}\cite[Lemma 7.3]{Friz-Hairer}\label{lemma-trans-rough-path}
Let $(Y,Y') \in \scrd_X^{2 \alpha}([0,T],W)$ and $f \in C_b^2(W,\bar{W})$ be arbitrary. Then we have $(f(Y),f(Y)') \in \scrd_X^{2 \alpha}([0,T],\bar{W})$.
\end{lemma}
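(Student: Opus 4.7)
The plan is to verify the three conditions that define membership in $\scrd_X^{2\alpha}([0,T],\bar{W})$ for the pair $(f(Y), f(Y)')$ where $f(Y)'_t = Df(Y_t) Y'_t$: namely, $\alpha$-Hölder continuity of $f(Y)$, $\alpha$-Hölder continuity of $f(Y)'$, and $2\alpha$-Hölder continuity of the remainder $R^{f(Y)}_{s,t} := f(Y)_{s,t} - f(Y)'_s X_{s,t}$.

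First, since $f \in C_b^2(W,\bar{W})$, the derivative $Df$ is globally bounded and Lipschitz (the latter because $D^2 f$ is bounded). Hence $f$ itself is Lipschitz with constant $\|Df\|_\infty$, which immediately gives $|f(Y)_{s,t}| \le \|Df\|_\infty |Y_{s,t}|$, so $f(Y) \in \calc^\alpha([0,T],\bar{W})$ with $\|f(Y)\|_\alpha \le \|Df\|_\infty \|Y\|_\alpha$. For the Gubinelli derivative $Df(Y) Y'$, I would split
\begin{align*}
Df(Y_t) Y'_t - Df(Y_s) Y'_s = \bigl(Df(Y_t) - Df(Y_s)\bigr) Y'_t + Df(Y_s)\bigl(Y'_t - Y'_s\bigr),
\end{align*}
bound the first summand by $\|D^2 f\|_\infty \|Y\|_\alpha \|Y'\|_\infty |t-s|^\alpha$ and the second by $\|Df\|_\infty \|Y'\|_\alpha |t-s|^\alpha$, yielding $f(Y)' \in \calc^\alpha([0,T],L(V,\bar{W}))$.

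The main step, and the only one that needs real work, is controlling the remainder. Here I would apply a second-order Taylor expansion of $f$ along the segment from $Y_s$ to $Y_t$,
\begin{align*}
f(Y_t) - f(Y_s) = Df(Y_s) Y_{s,t} + \int_0^1 (1-\theta)\, D^2 f\bigl(Y_s + \theta Y_{s,t}\bigr)(Y_{s,t}, Y_{s,t})\, d\theta,
\end{align*}
and subtract $Df(Y_s) Y'_s X_{s,t}$ to obtain
\begin{align*}
R^{f(Y)}_{s,t} = Df(Y_s) R^Y_{s,t} + \int_0^1 (1-\theta)\, D^2 f\bigl(Y_s + \theta Y_{s,t}\bigr)(Y_{s,t}, Y_{s,t})\, d\theta.
\end{align*}
The first term is bounded by $\|Df\|_\infty \|R^Y\|_{2\alpha} |t-s|^{2\alpha}$, and the second by $\tfrac{1}{2}\|D^2 f\|_\infty \|Y\|_\alpha^2 |t-s|^{2\alpha}$, since both $Y_{s,t}$ factors contribute $|t-s|^\alpha$. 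Combining everything gives $\|R^{f(Y)}\|_{2\alpha} < \infty$, which completes the verification.

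The only obstacle worth naming is making sure the Taylor remainder is estimated in the correct operator norm on $L(V\otimes V, \bar{W})$ or, equivalently, as a bilinear form, but this is routine once one writes it out. All other steps are direct applications of the boundedness of $Df$, $D^2 f$ and the definition of a controlled rough path.
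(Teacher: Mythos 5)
Your argument is correct and is essentially the proof of \cite[Lemma 7.3]{Friz-Hairer} that the paper cites without reproducing: the Taylor expansion with integral remainder yields exactly the decomposition $R^{f(Y)}_{s,t} = Df(Y_s)R^Y_{s,t} + O(|Y_{s,t}|^2)$, and the two H\"older estimates for $f(Y)$ and $f(Y)'$ are handled the standard way. No gaps.
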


We define the \emph{bracket} $[ \mathbf{X} ] : [0,T]^2 \to {\rm Sym}(V \otimes V)$ as
\begin{align*}
[ \mathbf{X} ]_{s,t} := X_{s,t} \otimes X_{s,t} - 2 \, {\rm Sym}(\bbx_{s,t}), \quad s,t \in [0,T].
\end{align*}

Furthermore, we define the space $\scrc_g^{\alpha}([0,T],V)$ of \emph{weakly geometric rough paths} as the set of all $(X,\bbx) \in \scrc^{\alpha}([0,T],V)$ such that
\begin{align*}
{\rm Sym}(\bbx_{s,t}) = \frac{1}{2} X_{s,t} \otimes X_{s,t} \quad \text{for all $s,t \in [0,T]$.}
\end{align*}
Then the following auxiliary result is obvious.

\begin{lemma}\label{lemma-geometric}
If $\mathbf{X} \in \scrc_g^{\alpha}([0,T],V)$ is a weakly geometric rough path, then we have $[ \mathbf{X} ] = 0$.
\end{lemma}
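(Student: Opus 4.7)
The plan is to simply substitute the defining identity of a weakly geometric rough path into the defining formula for the bracket $[\mathbf{X}]$. Fix $s,t \in [0,T]$; by definition of $\scrc_g^{\alpha}([0,T],V)$ we have $\Sym(\bbx_{s,t}) = \tfrac{1}{2} X_{s,t} \otimes X_{s,t}$, so
\begin{align*}
[\mathbf{X}]_{s,t} = X_{s,t} \otimes X_{s,t} - 2\,\Sym(\bbx_{s,t}) = X_{s,t} \otimes X_{s,t} - X_{s,t} \otimes X_{s,t} = 0.
\end{align*}

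There is essentially no obstacle here; the only thing worth checking is that $[\mathbf{X}]_{s,t}$ genuinely lies in $\Sym(V \otimes V)$ as claimed before the definition, which is fine because $X_{s,t} \otimes X_{s,t}$ is symmetric as a pure tensor of a vector with itself, and subtracting $2\,\Sym(\bbx_{s,t})$ preserves symmetry. Since $s,t \in [0,T]$ were arbitrary, we obtain $[\mathbf{X}] \equiv 0$ as a function on $[0,T]^2$.
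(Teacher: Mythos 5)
Your proof is correct and is precisely the one-line substitution the paper has in mind when it declares the lemma ``obvious'': inserting $\Sym(\bbx_{s,t}) = \tfrac{1}{2} X_{s,t} \otimes X_{s,t}$ into $[\mathbf{X}]_{s,t} = X_{s,t} \otimes X_{s,t} - 2\,\Sym(\bbx_{s,t})$ gives $0$. The additional remark that $X_{s,t} \otimes X_{s,t}$ is symmetric is a harmless bonus consistency check, not a needed step.
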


\begin{theorem}[It\^{o}'s formula]\label{thm-Ito-rough}\cite[Thm. 7.7]{Friz-Hairer}
Let $F \in C_b^3(W,\bar{W})$ be arbitrary, and let $(Y,Y') \in \scrd_X^{2 \alpha}([0,T],W)$ a controlled rough path of the form
\begin{align*}
Y_t = Y_0 + \int_0^t Y_s' d \bfx_s + \Gamma_t, \quad t \in [0,T]
\end{align*}
for some controlled rough path $(Y',Y'') \in \scrd_X^{2 \alpha}([0,T],L(V,W))$ and some path $\Gamma \in \calc^{2 \alpha}([0,T],W)$. Then we have
\begin{align*}
F(Y_t) &= F(Y_0) + \int_0^t DF(Y_s) Y_s' d\bfx_{s} + \int_0^t DF(Y_s) d\Gamma_s
\\ &\quad + \frac{1}{2} \int_0^t D^2 F(Y_s)(Y_s',Y_s') d[\bfx]_s, \quad t \in [0,T].
\end{align*}
\end{theorem}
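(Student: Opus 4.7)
The plan is to prove the identity through a partition-based Taylor expansion, which is the standard route for an It\^{o} formula in the rough-path setting. Fix a partition $\Pi$ of $[0,t]$, write
\begin{align*}
F(Y_t) - F(Y_0) = \sum_{[u,v] \in \Pi} \big( F(Y_v) - F(Y_u) \big),
\end{align*}
and expand each increment to second order,
\begin{align*}
F(Y_v) - F(Y_u) = DF(Y_u) Y_{u,v} + \tfrac{1}{2} D^2 F(Y_u)(Y_{u,v}, Y_{u,v}) + R^F_{u,v},
\end{align*}
with $|R^F_{u,v}| \lesssim \|D^3 F\|_\infty \, |Y_{u,v}|^3$. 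Since $(Y,Y') \in \scrd_X^{2\alpha}$ implies $Y \in \calc^{\alpha}$ and $3\alpha > 1$, the cubic remainders sum to zero as $|\Pi| \to 0$.

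Next, I would use the representation $Y_t = Y_0 + \int_0^t Y_s' d\bfx_s + \Gamma_t$ together with the definition of the rough integral to obtain the controlled expansion
\begin{align*}
Y_{u,v} = Y_u' X_{u,v} + Y_u'' \bbx_{u,v} + \Gamma_{u,v} + O(|v-u|^{3\alpha}),
\end{align*}
where $Y''$ is the Gubinelli derivative of $Y'$. Substituting into the linear Taylor contribution, the sum $\sum DF(Y_u) \Gamma_{u,v}$ converges to $\int_0^t DF(Y_s) d\Gamma_s$ by a Young/sewing estimate, while $\sum \{ DF(Y_u) Y_u' X_{u,v} + DF(Y_u) Y_u'' \bbx_{u,v} \}$ is the level-one/level-two Riemann-sum approximation of $\int DF(Y) Y' d\bfx$ based on the controlled rough path $(DF(Y) Y', (DF(Y) Y')')$ provided by Lemma \ref{lemma-trans-rough-path}. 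By the Leibniz-type identity $(DF(Y) Y')' = D^2 F(Y)(Y' \otimes Y') + DF(Y) Y''$, a genuine approximation would also require the compensator $D^2 F(Y_u)(Y_u' \otimes Y_u') \bbx_{u,v}$, which is still missing at this stage.

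For the quadratic Taylor contribution, I would replace $Y_{u,v}$ by its leading part $Y_u' X_{u,v}$ (the error being cubic in $|v-u|^{\alpha}$) to get $\tfrac{1}{2} D^2 F(Y_u)(Y_u',Y_u')(X_{u,v} \otimes X_{u,v})$. Using the bracket identity $X_{u,v} \otimes X_{u,v} = [\bfx]_{u,v} + 2\,\Sym(\bbx_{u,v})$ and the symmetry of $D^2 F$, this splits as
\begin{align*}
\tfrac{1}{2} D^2 F(Y_u)(Y_u',Y_u')[\bfx]_{u,v} + D^2 F(Y_u)(Y_u' \otimes Y_u') \bbx_{u,v}.
\end{align*}
The first piece is $2\alpha$-H\"{o}lder in $(u,v)$ and sums, via the sewing lemma, to $\tfrac{1}{2} \int_0^t D^2 F(Y_s)(Y_s',Y_s') d[\bfx]_s$. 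The second piece is precisely the missing compensator identified above; combined with the linear-term contributions, it yields the full level-two Riemann sum for the controlled rough path $(DF(Y)Y', D^2 F(Y)(Y' \otimes Y') + DF(Y) Y'')$, which by the sewing lemma converges to $\int_0^t DF(Y_s) Y_s' d\bfx_s$.

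The main obstacle I foresee is the combinatorial bookkeeping of the $\bbx$-contributions: one must verify that the symmetric piece $\Sym(\bbx_{u,v})$ arising from expanding $X_{u,v} \otimes X_{u,v}$ in the quadratic Taylor term matches exactly the second-level compensator demanded by the rough integral of $DF(Y) Y'$, so that only the antisymmetry-insensitive remainder $[\bfx]$ remains as a genuinely new contribution. Once this identification is in place and the sewing/Young estimates are uniformly controlled, the identity follows by sending $|\Pi| \to 0$.
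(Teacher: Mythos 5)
The paper does not prove this statement at all --- it is quoted directly from \cite[Thm.~7.7]{Friz-Hairer} --- and your sketch reproduces exactly the Taylor-expansion/sewing argument of that reference, with the correct bookkeeping: the symmetric part of $\bbx_{u,v}$ produced by the quadratic Taylor term supplies precisely the level-two compensator $D^2F(Y_u)(Y_u'\otimes Y_u')\bbx_{u,v}$ needed for the compensated Riemann sum of $\int DF(Y)Y'\,d\bfx$, leaving the bracket $[\bfx]$ as the only genuinely new contribution. The argument is correct; the one step you leave implicit is that Chen's relation makes the increments $[\bfx]_{u,v}$ additive, so that $[\bfx]$ is an honest $2\alpha$-H\"older path and the final sum $\tfrac12\sum D^2F(Y_u)(Y_u',Y_u')[\bfx]_{u,v}$ is a Young integral to which the sewing lemma applies.
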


Now, we consider the $W$-valued rough differential equation (RDE)
\begin{align}\label{rough-ODE-app}
\left\{
\begin{array}{rcl}
dY_t & = & f_0(Y_t) dt + f(Y_t) d \bfx_t
\\ Y_0 & = & \xi
\end{array}
\right.
\end{align}
with mappings $f_0 : W \to W$ and $f : W \to L(V,W)$. We assume that $f_0$ is continuous, and that $f$ is of class $C_b^2$.

\begin{definition}
Let $\xi \in W$ be arbitrary. A path $(Y,Y') \in \scrd_X^{2 \alpha}([0,T_0],W)$ for some $T_0 \in (0,T]$ is called a \emph{local solution} to the RDE (\ref{rough-ODE-app}) with $Y_0 = \xi$ if $Y' = f(Y)$ and
\begin{align*}
Y_t = \xi + \int_0^t f_0(Y_s) ds + \int_0^t f(Y_s) d\mathbf{X}_s, \quad t \in [0,T_0].
\end{align*}
If we can choose $T_0 = T$, then we also call $(Y,Y')$ a \emph{(global) solution} to the RDE (\ref{rough-ODE-app}) with $Y_0 = \xi$.
\end{definition}

\begin{lemma}\label{lemma-solution-second-order}
Let $\xi \in W$ be arbitrary, and let $(Y,Y') \in \scrd_X^{2 \alpha}([0,T_0],W)$ be a local solution to the RDE (\ref{rough-ODE-app}) with $Y_0 = \xi$ for some $T_0 \in (0,T]$. Then we have $(Y',Y'') \in \scrd_X^{2 \alpha}([0,T_0],L(V,W))$ with $Y'' = Df(Y)f(Y)$.
\end{lemma}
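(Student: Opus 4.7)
The plan is to observe that this statement is essentially a direct application of Lemma \ref{lemma-trans-rough-path} (the chain-rule/stability lemma for controlled rough paths under $C_b^2$ functions), once one unwinds the definition of ``local solution''.

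First I would use that $(Y,Y') \in \scrd_X^{2\alpha}([0,T_0],W)$ is a local solution of (\ref{rough-ODE-app}), so by definition the Gubinelli derivative is $Y' = f(Y)$. Next I would invoke the standing hypothesis that $f : W \to L(V,W)$ is of class $C_b^2$ (which holds in the appendix setup, and in the main body we even have $C_b^3$), viewing $L(V,W)$ as the Banach space $\bar{W}$ in Lemma \ref{lemma-trans-rough-path}. That lemma, applied to $(Y,Y')$ and $f$, yields
\begin{align*}
(f(Y),\,f(Y)') \in \scrd_X^{2\alpha}([0,T_0],L(V,W)), \qquad f(Y)'_t = Df(Y_t)\,Y'_t.
\end{align*}
Substituting $Y' = f(Y)$ on the right-hand side gives $f(Y)'_t = Df(Y_t) f(Y_t)$.

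Finally I would note that $Y' = f(Y)$ implies $(Y',Y'') = (f(Y), f(Y)')$ provided we define $Y'' := Df(Y) f(Y)$; then the preceding display becomes exactly the assertion $(Y',Y'') \in \scrd_X^{2\alpha}([0,T_0],L(V,W))$ with $Y'' = Df(Y) f(Y)$. This concludes the argument.

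There is no real obstacle here: the lemma is a one-line corollary of Lemma \ref{lemma-trans-rough-path}. The only minor point to be careful about is the bookkeeping of the target space --- one must check that $L(V,W)$ plays the role of $\bar{W}$ in Lemma \ref{lemma-trans-rough-path} and that the regularity assumption $f \in C_b^2(W, L(V,W))$ is indeed in force, so that the composition $f(Y)$ produces a controlled rough path with the stated Gubinelli derivative.
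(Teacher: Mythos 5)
Your argument is correct and is exactly the paper's proof: the author likewise notes that $Y' = f(Y)$ by the definition of a local solution and then invokes Lemma \ref{lemma-trans-rough-path} with $\bar{W} = L(V,W)$ to conclude $Y'' = Df(Y)Y' = Df(Y)f(Y)$. Your additional bookkeeping about the target space and the $C_b^2$ hypothesis is accurate but adds nothing beyond the paper's one-line justification.
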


\begin{proof}
Since $Y' = f(Y)$, this is an immediate consequence of Lemma \ref{lemma-trans-rough-path}.
\end{proof}

For the following existence and uniqueness result see, for example \cite[Cor. 9.19]{Tappe-rough}.

\begin{theorem}\label{thm-existence-RDE}
Suppose that $f_0$ is Lipschitz continuous, and that $f$ is of class $C_b^3$. Then for every $\xi \in W$ there exists a unique global solution $(Y,Y') \in \scrd_X^{2 \alpha}([0,T],W)$ to the RDE (\ref{rough-ODE-app}) with $Y_0 = \xi$.
\end{theorem}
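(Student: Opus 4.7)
My plan is the classical Picard iteration in controlled rough paths: first establish unique local existence on some interval $[0,T_0]$ via Banach's fixed point theorem, and then extend to all of $[0,T]$ by concatenation, using an a priori bound to rule out blow-up.

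For the local step, I would consider the solution map $\Phi$ defined on $\scrd_X^{2\alpha}([0,T_0],W)$ by
\[
\Phi(Y,Y')_t := \bigg( \xi + \int_0^t f_0(Y_s) ds + \int_0^t f(Y_s) d\bfx_s, \ f(Y_t) \bigg), \quad t \in [0,T_0].
\]
This is well defined because $f \in C_b^2$ combined with Lemma \ref{lemma-trans-rough-path} yields $(f(Y), Df(Y)Y') \in \scrd_X^{2\alpha}([0,T_0],L(V,W))$, after which Corollary \ref{cor-Gubinelli-der-mixed} ensures that $\Phi(Y,Y')$ is itself a controlled rough path with Gubinelli derivative $f(Y)$. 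A fixed point of $\Phi$ is then, by definition, a local solution to the RDE with $Y_0 = \xi$. The goal is to show that on a sufficiently small interval $[0,T_0]$ the map $\Phi$ leaves a closed ball around the constant controlled path $(\xi,0)$ (cf.\ Lemma \ref{lemma-reg-derivative-zero}) invariant and acts as a contraction. Both properties follow from the standard rough-integral estimates (via the sewing lemma), which produce a factor of the form $T_0^{\alpha}$ in the relevant H\"older and remainder seminorms of $\Phi(Y,Y')$ and of $\Phi(Y,Y') - \Phi(\tilde{Y},\tilde{Y}')$; the drift contributes only a smooth term of magnitude $T_0 \Lip(f_0)$, which is harmless.

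To globalize, I would exploit that the local existence time $T_0$ in the fixed-point argument depends only on $\|\bfx\|_{\alpha}$, on $\|f\|_{C_b^3}$, and on $\Lip(f_0)$, but \emph{not} on the starting value $\xi$. One can therefore iterate the construction starting from $Y_{T_0}, Y_{2T_0}, \ldots$; the boundedness of $f$ (since $f \in C_b^3$) together with the linear growth bound $|f_0(y)| \leq |f_0(0)| + \Lip(f_0)|y|$ yields a Gr\"onwall-type a priori estimate $|Y_t| \leq (|\xi| + Ct) e^{Ct}$ on any interval of existence, so no blow-up occurs and finitely many concatenations cover $[0,T]$. Global uniqueness then follows by a standard maximality argument from local uniqueness (the set of $t$ on which two candidate solutions agree is nonempty, closed by continuity, and relatively open by the local uniqueness).

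The main technical obstacle will be the detailed contraction estimate $\|\Phi(Y,Y') - \Phi(\tilde{Y},\tilde{Y}')\|_{\scrd_X^{2\alpha}} \leq q(T_0) \, \|(Y,Y') - (\tilde{Y},\tilde{Y}')\|_{\scrd_X^{2\alpha}}$ with $q(T_0) \to 0$ as $T_0 \downarrow 0$. Controlling the difference at the level of the Gubinelli remainder requires a second-order Taylor expansion of $f(Y) - f(\tilde{Y})$, which is where the full $C_b^3$ hypothesis on $f$ (rather than merely $C_b^2$) enters: it furnishes a Lipschitz-type bound on $Df$ and $D^2f$ that propagates through the sewing-lemma estimate for the rough integral. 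These computations are technical but routine, and once they are in place the shrinking of $T_0$ produces the required contraction factor.
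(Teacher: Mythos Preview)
Your outline is correct and follows the standard Picard--Banach approach for RDEs (essentially the argument in Friz--Hairer, Chapter~8). Note, however, that the paper does not prove this theorem at all: it is stated in the appendix as a known result, with the line ``For the following existence and uniqueness result see, for example [Cor.~9.19]{Tappe-rough}'' preceding the statement. So there is no proof in the paper to compare against; your proposal simply supplies what the paper imports as a black box, and the route you sketch is the expected one.
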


The proof of the following auxiliary result concerning the concatenation of solutions is straightforward, and therefore omitted.

\begin{lemma}\label{lemma-concat}
Suppose that $f_0$ is Lipschitz continuous, and that $f$ is of class $C_b^3$. Let $\xi \in W$ be arbitrary, and denote by $(Y,Y') \in \scrd_X^{2 \alpha}([0,T],W)$ the solution to the RDE (\ref{rough-ODE-app}) with $Y_0 = \xi$. Furthermore, let $T_0 \in (0,T)$ be arbitrary, and denote by $(Z,Z') \in \scrd_X^{2 \alpha}([0,T],W)$ the solution to the RDE (\ref{rough-ODE-app}) with $Z_0 = Y_{T_0}$. Then we have $Y_t = Z_{t-T_0}$ for all $t \in [T_0,T]$.
\end{lemma}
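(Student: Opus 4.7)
The plan is to invoke uniqueness from Theorem \ref{thm-existence-RDE} after verifying that the time-shifted restriction of $Y$ satisfies the same RDE as $Z$ starting from the same initial condition $Y_{T_0}$. This is the rough-path analogue of the Markov/concatenation property of ODE and SDE solutions, and the delicate point is handling the rough integral under a time shift.

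First, I would introduce the time-shifted rough path $\tilde{\bfx} = (\tilde X, \tilde{\bbx}) \in \scrc^{\alpha}([0, T - T_0], V)$ defined by $\tilde X_{s,t} := X_{T_0 + s,\, T_0 + t}$ and $\tilde{\bbx}_{s,t} := \bbx_{T_0 + s,\, T_0 + t}$, with respect to which $Z$, reindexed on $[0, T - T_0]$, is understood to solve the RDE starting from $Y_{T_0}$. Chen's relation and the $\alpha$-H\"older estimates for $\tilde{\bfx}$ transfer immediately from those of $\bfx$. Now set $\tilde Y_s := Y_{T_0 + s}$ and $\tilde Y'_s := Y'_{T_0 + s} = f(\tilde Y_s)$ for $s \in [0, T - T_0]$. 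The controlled-rough-path estimates for $(\tilde Y, \tilde Y')$ with respect to $\tilde X$ are precisely the estimates for $(Y, Y')$ restricted to increments $[T_0 + s, T_0 + t]$, so $(\tilde Y, \tilde Y') \in \scrd_{\tilde X}^{2\alpha}([0, T - T_0], W)$.

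Second, I would verify that
\[
\tilde Y_s = Y_{T_0} + \int_0^s f_0(\tilde Y_r)\, dr + \int_0^s f(\tilde Y_r)\, d \tilde{\bfx}_r, \quad s \in [0, T - T_0].
\]
The drift term follows at once from the substitution $r \mapsto r + T_0$. For the rough integral term, the compensated Riemann sums
\[
\sum_{[u,v] \in \Pi} \bigl( f(Y_u) X_{u,v} + Df(Y_u) f(Y_u) \bbx_{u,v} \bigr)
\]
defining $\int_{T_0}^{T_0 + s} f(Y_r)\, d \bfx_r$ are in mesh-preserving bijection, via the shift $u \mapsto u - T_0$, with the sums defining $\int_0^s f(\tilde Y_r)\, d \tilde{\bfx}_r$, so passing to the limit identifies the two integrals. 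Combining this with the identity for $Y$ at times $T_0$ and $T_0 + s$ produces the displayed equation.

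Finally, since $\tilde Y$ and $Z$ both belong to $\scrd_{\tilde X}^{2\alpha}([0, T - T_0], W)$ and both solve the RDE driven by $\tilde{\bfx}$ with initial condition $Y_{T_0}$, Theorem \ref{thm-existence-RDE} yields $\tilde Y = Z$ on $[0, T - T_0]$, i.e.\ $Y_t = Z_{t - T_0}$ for all $t \in [T_0, T]$. The only genuinely nontrivial point in the argument is the shift-invariance of the rough integral, but this is ultimately bookkeeping: the rough integral is defined as a limit of compensated Riemann sums, and the time shift is a bijection on partitions which preserves the mesh and maps summands to summands. All other steps reduce to direct translation of the corresponding properties of $(Y, Y')$ and $\bfx$ via the shift.
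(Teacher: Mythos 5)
The paper declares this lemma ``straightforward'' and omits the proof, so there is no argument of the author's to compare against line by line. Your strategy---introduce the shifted rough path $\tilde{\bfx}$, check that $s \mapsto Y_{T_0+s}$ with Gubinelli derivative $f(Y_{T_0+\cdot})$ is controlled by $\tilde{X}$, identify $\int_{T_0}^{T_0+s} f(Y_r)\,d\bfx_r$ with $\int_0^s f(\tilde{Y}_r)\,d\tilde{\bfx}_r$ through the mesh-preserving bijection of compensated Riemann sums, and conclude by uniqueness---is the natural way to prove a concatenation statement, and each of these individual steps is correct.

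There is, however, a genuine gap, and it sits exactly where you write that $Z$, reindexed on $[0,T-T_0]$, ``is understood to solve'' the RDE driven by $\tilde{\bfx}$. The lemma defines $(Z,Z') \in \scrd_X^{2\alpha}([0,T],W)$ as the solution of the RDE (\ref{rough-ODE-app}) with $Z_0 = Y_{T_0}$, that is, $Z_t = Y_{T_0} + \int_0^t f_0(Z_s)\,ds + \int_0^t f(Z_s)\,d\bfx_s$ with the \emph{original, unshifted} driver $\bfx$ started at time $0$. That object does not solve the $\tilde{\bfx}$-driven equation, and for it the asserted identity $Y_t = Z_{t-T_0}$ is false: take $f_0 = 0$ and $f \equiv \sigma$ constant (which is $C_b^3$), so that $Y_t = \xi + \sigma X_{0,t}$ and $Z_s = Y_{T_0} + \sigma X_{0,s}$, whence $Y_{T_0+s} - Z_s = \sigma\big(X_{T_0,T_0+s} - X_{0,s}\big)$, which does not vanish for a general path $X$. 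What your argument actually proves---correctly---is that $s \mapsto Y_{T_0+s}$ is the unique solution on $[0,T-T_0]$ of the RDE driven by the shifted rough path $\tilde{\bfx}$ with initial value $Y_{T_0}$; this is the version of the lemma that the additional statement in Theorem \ref{thm-main} genuinely needs (together with the observation that condition (ii) yields local invariance also for the $\tilde{\bfx}$-driven equation, since $\tilde{\bfx}$ inherits true roughness and the bracket density $x$ from $\bfx$). You should therefore either state explicitly that you are correcting the lemma by redefining $Z$ as the solution of the time-shifted equation, or supply an argument identifying the two notions of $Z$---and no such argument exists, because they differ. As written, the proof silently changes the meaning of $Z$ at its first step.
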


Now, let $E$ be another Banach space. We consider the $E$-valued RDE
\begin{align}\label{rough-ODE-Z-app}
\left\{
\begin{array}{rcl}
dZ_t & = & g_0(Z_t) dt + g(Z_t) d \bfx_t
\\ Z_0 & = & \eta
\end{array}
\right.
\end{align}
with mappings $g_0 : E \to E$ and $g : E \to L(V,E)$. We assume that $g_0$ is continuous, and that $g$ is of class $C_b^2$.

\begin{corollary}\label{cor-Ito}
We assume that $[\bfx]_t = x t$, $t \in [0,T]$ for some symmetric element $x \in V \otimes V$. Let $\eta \in E$ be arbitrary, and let $(Z,Z') \in \scrd_X^{2\alpha}([0,T],E)$ be a solution to the RDE (\ref{rough-ODE-Z-app}) with $Z_0 = \eta$. Furthermore, let $\phi \in C_b^3(E,W)$ be arbitrary. Then we have
\begin{align*}
\phi(Z_t) &= \phi(\eta) + \int_0^t \bigg( D \phi(Z_s) g_0(Z_s) + \frac{1}{2} D^2 \phi(Z_s)(g(Z_s),g(Z_s)) x \bigg) ds
\\ &\quad + \int_0^t D \phi(Z_s)g(Z_s) d \bfx_s, \quad t \in [0,T].
\end{align*}
\end{corollary}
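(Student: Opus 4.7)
The plan is to reduce the statement directly to the rough It\^o formula (Theorem \ref{thm-Ito-rough}) applied with $F := \phi$ and the controlled rough path $(Z,Z') \in \scrd_X^{2\alpha}([0,T],E)$. Since $(Z,Z')$ solves the RDE \eqref{rough-ODE-Z-app}, we have $Z' = g(Z)$ and
\[
Z_t = \eta + \int_0^t g_0(Z_s)\,ds + \int_0^t g(Z_s)\,d\bfx_s, \quad t \in [0,T].
\]
I would set $\Gamma_t := \int_0^t g_0(Z_s)\,ds$. Since $g_0$ is continuous and $Z$ is continuous on $[0,T]$, the integrand $g_0 \circ Z$ is bounded, so $\Gamma$ is Lipschitz and in particular lies in $\calc^{2\alpha}([0,T],E)$ because $2\alpha \le 1$. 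Moreover, by Lemma \ref{lemma-solution-second-order}, $(Z',Z'') = (g(Z), Dg(Z)g(Z)) \in \scrd_X^{2\alpha}([0,T],L(V,E))$. Thus $Z$ has exactly the decomposition required by the hypothesis of Theorem \ref{thm-Ito-rough}.

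Next, I would invoke Theorem \ref{thm-Ito-rough}: since $\phi \in C_b^3(E,W)$, the formula yields
\[
\phi(Z_t) = \phi(\eta) + \int_0^t D\phi(Z_s) Z_s' \, d\bfx_s + \int_0^t D\phi(Z_s) \, d\Gamma_s + \frac{1}{2} \int_0^t D^2\phi(Z_s)(Z_s',Z_s') \, d[\bfx]_s.
\]
It then remains to simplify the three integrals on the right. For the rough integral, substitute $Z' = g(Z)$, giving $\int_0^t D\phi(Z_s) g(Z_s)\,d\bfx_s$. For the second integral, since $\Gamma$ is absolutely continuous with density $g_0(Z_\cdot)$, the Young-Stieltjes integral against $d\Gamma$ reduces to $\int_0^t D\phi(Z_s) g_0(Z_s)\,ds$. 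For the bracket integral, the hypothesis $[\bfx]_t = x t$ means $d[\bfx]_s = x\,ds$, so this term becomes $\frac{1}{2}\int_0^t D^2\phi(Z_s)(g(Z_s),g(Z_s))\, x\,ds$. Collecting the two Lebesgue integrals into a single $ds$-integral yields exactly the claimed identity.

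There is essentially no obstacle here; the only point requiring a moment's thought is verifying that $\Gamma$ has the regularity needed for Theorem \ref{thm-Ito-rough} (Lipschitz, hence $2\alpha$-H\"older) and that the Gubinelli derivative of $Z$ in $\scrd_X^{2\alpha}$ genuinely satisfies $Z'=g(Z)$ with $Z''=Dg(Z)g(Z)$, which is supplied by Lemma \ref{lemma-solution-second-order} via Lemma \ref{lemma-trans-rough-path}. Everything else is pure bookkeeping in the formula from \cite[Thm.~7.7]{Friz-Hairer}.
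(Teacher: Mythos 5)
Your proof is correct and follows exactly the route the paper takes: the paper's proof of Corollary \ref{cor-Ito} is the one-line statement that it follows from Theorem \ref{thm-Ito-rough} and Lemma \ref{lemma-solution-second-order}, and you have simply spelled out the details (the choice of $\Gamma$, its $2\alpha$-H\"older regularity, and the substitution $d[\bfx]_s = x\,ds$). Nothing to add.
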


\begin{proof}
This follows from Theorem \ref{thm-Ito-rough} and Lemma \ref{lemma-solution-second-order}.
\end{proof}

The following decomposition (\ref{decomp}) may be compared with \cite[Prop. 6.1.3]{fillnm}, where a similar decomposition is presented in a differential geometric context for vector fields.

\begin{proposition}\label{prop-decomposition}
Suppose that $X$ is truly rough. Let $(Z,Z') \in \scrd_X^{2\alpha}([0,T],E)$ be a controlled rough path such that $Z' = g(Z)$ for some $g \in C_b^2(E,L(V,E))$, and let $(Y,Y') \in \scrd_X^{2\alpha}([0,T],W)$ be a controlled rough path such that $Y' = f(Y)$ for some $f \in C_b^2(W,L(V,W))$. Furthermore, we assume that $Y = \phi(Z)$ for some mapping $\phi \in C_b^3(E,W)$. Then we have the decomposition
\begin{align}\label{decomp}
D f(Y) f(Y) = D \phi(Z) Dg(Z) g(Z) + D^2 \phi(Z) (g(Z),g(Z)).
\end{align}
\end{proposition}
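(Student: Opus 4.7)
The plan is to identify the Gubinelli derivative of the $L(V,W)$-valued path $Y'$ in two different ways and then invoke uniqueness. First, since $(Z,Z') = (Z,g(Z)) \in \scrd_X^{2\alpha}$ and $\phi \in C_b^3 \subset C_b^2$, Lemma~\ref{lemma-trans-rough-path} shows that $(\phi(Z),D\phi(Z)g(Z))$ is a controlled rough path. As we are given that $(Y,f(Y)) = (\phi(Z),f(\phi(Z)))$ is also controlled with the same first component, Proposition~\ref{prop-Gubinelli-unique} forces
\begin{equation*}
Y' = f(Y) = D\phi(Z)g(Z).
\end{equation*}

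Next I pass to the second level. On the one hand, from $Y' = f(Y)$ with $f \in C_b^2$, a further application of Lemma~\ref{lemma-trans-rough-path} shows that $Y'$ is controlled with Gubinelli derivative $Df(Y) f(Y) \in L(V, L(V, W))$. On the other hand, write $Y' = F(Z)$ with $F : E \to L(V,W)$ defined by $F(z) := D\phi(z) \circ g(z)$. Since $\phi \in C_b^3$ and $g \in C_b^2$ and the bilinear composition map is bounded, $F$ is itself of class $C_b^2$, so Lemma~\ref{lemma-trans-rough-path} applies a third time and delivers $DF(Z) g(Z)$ as a Gubinelli derivative of $Y'$. A second appeal to Proposition~\ref{prop-Gubinelli-unique} yields
\begin{equation*}
Df(Y)f(Y) = DF(Z)g(Z).
\end{equation*}

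It then suffices to unpack $DF$ via the Leibniz rule for the composition $D\phi(z) \circ g(z)$:
\begin{equation*}
DF(z)(h) = D^2\phi(z)(h,\cdot) \circ g(z) + D\phi(z) \circ Dg(z)(h), \quad h \in E.
\end{equation*}
Substituting $h = g(z) v$, evaluating at $u \in V$, and using the canonical injection $L(V, L(V, W)) \hookrightarrow L(V \otimes V, W)$, the two summands become $D^2\phi(z)(g(z),g(z))$ and $D\phi(z) Dg(z) g(z)$ respectively, as elements of $L(V \otimes V, W)$. This is exactly the decomposition (\ref{decomp}). I expect the main obstacle to be not analytic but bookkeeping: keeping straight the spaces $L(E,W)$, $L(V,E)$, $L(V,L(V,W))$, and their embedding into $L(V \otimes V,W)$, and making sure the Leibniz rule produces the terms in the order claimed.
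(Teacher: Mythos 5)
Your proposal is correct and follows essentially the same route as the paper: both compute the Gubinelli derivative of $Y'$ in two ways (via $Y'=f(Y)$ and via $Y'=D\phi(Z)g(Z)=\varphi(Z)$ using Lemma \ref{lemma-trans-rough-path}), invoke Proposition \ref{prop-Gubinelli-unique} twice, and expand $D\varphi$ by the product rule (Proposition \ref{prop-Leibniz}) applied to the bilinear composition of $D\phi$ and $g$. The bookkeeping you flag at the end is handled in the paper exactly as you anticipate, via the canonical injection $L(V,L(V,W))\hookrightarrow L(V\otimes V,W)$.
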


\begin{proof}
By Lemma \ref{lemma-trans-rough-path} we have $(Y',Y'') \in \scrd_X^{2 \alpha}([0,T],L(V,W))$ with
\begin{align*}
Y'' = Df(Y)Y' = Df(Y)f(Y).
\end{align*}
Consider the mapping $\varphi : E \to L(V,W)$ given by
\begin{align*}
\varphi(z) := D\phi(z) g(z), \quad z \in E.
\end{align*}
Note that $\varphi = B(D \phi,g)$, where $B : L(E,W) \times L(V,E) \to L(V,W)$ denotes the continuous bilinear operator given by $B(R,S) = R \circ S$. By the product rule (Proposition \ref{prop-Leibniz}) we obtain that $\varphi$ is of class $C_b^2$, and that for all $z,v \in E$ we have
\begin{align*}
D \varphi(z)v &= B(D^2 \phi(z)v,g(z)) + B(D \phi(z), Dg(z)v)
\\ &= D^2 \phi(z)(v,g(z)) + D \phi(z) D g(z) v.
\end{align*}
By Lemma \ref{lemma-trans-rough-path} we have $(\phi(Z),\phi(Z)') \in \scrd_X^{2\alpha}([0,T],W)$, where
\begin{align*}
\phi(Z)' = D \phi(Z) Z' = D \phi(Z) g(Z).
\end{align*}
Hence, by the uniqueness of the Gubinelli derivative (Proposition \ref{prop-Gubinelli-unique}) we obtain
\begin{align*}
Y' = \phi(Z)' = D \phi(Z) g(Z) = \varphi(Z).
\end{align*}
Furthermore, by Lemma \ref{lemma-trans-rough-path} we have $(\varphi(Z),\varphi(Z)') \in \scrd_X^{2\alpha}([0,T],L(V,W))$, where
\begin{align*}
\varphi(Z)' = D \varphi(Z) Z'.
\end{align*}
Thus, by the uniqueness of the Gubinelli derivative (Proposition \ref{prop-Gubinelli-unique}) we obtain
\begin{align*}
Df(Y)f(Y) &= Y'' = \varphi(Z)' = D \varphi(Z) Z' = D \phi(Z) D g(Z) Z' + D^2 \phi(Z)(Z',g(Z))
\\ &= D \phi(Z) D g(Z) g(Z) + D^2 \phi(Z)(g(Z),g(Z)),
\end{align*}
completing the proof.
\end{proof}

\section{Finite dimensional submanifolds in Banach spaces}\label{sec-manifolds-general}\label{app-manifolds}

In this appendix we provide the required results about finite dimensional submanifolds in Banach spaces; see \cite[Sec. 6]{fillnm} for further details. Let $W$ be a Banach space. Furthermore, let $m,k \in \bbn$ be positive integers.

\begin{definition}
A subset $\calm \subset W$ is called an $m$-dimensional \emph{$C^k$-submanifold} of $W$ if for every $y \in \calm$ there exist an open neighborhood $U \subset H$ of $y$, an open set $O \subset \mathbb{R}^m$ and a mapping $\phi \in C^k(O,W)$ such that:
\begin{enumerate}
\item The mapping $\phi : O \rightarrow U \cap \calm$ is a homeomorphism.

\item The linear operator $D \phi(z) \in L(\bbr^m,W)$ is one-to-one for all $z \in O$.
\end{enumerate}
The mapping $\phi$ is called a \emph{local parametrization} of $\calm$ around $y$.
\end{definition}

For what follows, let $\calm$ be an $m$-dimensional $C^k$-submanifold of $W$.

\begin{lemma}\cite[Lemma~6.1.1]{fillnm}\label{lemma-change-para}
Let $\phi_i : V_i \rightarrow U_i \cap \calm$, $i=1,2$ be two local parametrizations of $\calm$ with $X := U_1 \cap U_2 \cap \calm \neq \emptyset$. Then the mapping
\begin{align*}
\varphi := \phi_1^{-1} \circ \phi_2 : \phi_2^{-1}(X) \rightarrow \phi_1^{-1}(X)
\end{align*}
is a $C^k$-diffeomorphism.
\end{lemma}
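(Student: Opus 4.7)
\textbf{Proof proposal for Lemma \ref{lemma-change-para}.}
The map $\varphi$ is automatically a bijective homeomorphism between the open sets $\phi_2^{-1}(X) \subset \bbr^m$ and $\phi_1^{-1}(X) \subset \bbr^m$, since both $\phi_i$ are homeomorphisms onto $U_i \cap \calm$. By symmetry, it suffices to show that $\varphi$ is of class $C^k$ on a neighborhood of an arbitrary point $z_0 \in \phi_2^{-1}(X)$; applying the same argument with $\phi_1$ and $\phi_2$ interchanged then yields $\varphi^{-1} \in C^k$, which gives the $C^k$-diffeomorphism property.

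The main difficulty is that $\phi_1^{-1}$ is defined only on the submanifold $U_1 \cap \calm$, so the chain rule cannot be applied directly to $\phi_1^{-1} \circ \phi_2$. My plan is to construct a local $C^k$-extension of $\phi_1^{-1}$ to an open neighborhood in $W$. Set $w_0 := \varphi(z_0) \in \phi_1^{-1}(X)$ and $y_0 := \phi_1(w_0) = \phi_2(z_0)$. Since $D\phi_1(w_0) \in L(\bbr^m,W)$ is injective, its image $F := D\phi_1(w_0)(\bbr^m)$ is an $m$-dimensional subspace of $W$. Pick a basis $v_1,\dots,v_m$ of $F$ and, via Hahn-Banach, continuous linear functionals $\ell_1,\dots,\ell_m \in W^*$ satisfying $\ell_i(v_j) = \delta_{ij}$. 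Setting $\pi := (\ell_1,\dots,\ell_m) \in L(W,\bbr^m)$, one checks that $\pi \circ D\phi_1(w_0) = D(\pi \circ \phi_1)(w_0)$ is the identity on $\bbr^m$, hence in particular a linear isomorphism of $\bbr^m$.

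By the inverse function theorem in $\bbr^m$, the $C^k$-map $\pi \circ \phi_1$ is a local $C^k$-diffeomorphism from a neighborhood $V \subset V_1$ of $w_0$ onto an open set $\tilde{V} \subset \bbr^m$. Denote its inverse by $\psi \in C^k(\tilde{V},\bbr^m)$. For every $y \in U_1 \cap \calm$ sufficiently close to $y_0$ we have $\phi_1^{-1}(y) \in V$ and therefore
\begin{align*}
\psi(\pi(y)) = \psi\bigl( (\pi \circ \phi_1)(\phi_1^{-1}(y)) \bigr) = \phi_1^{-1}(y).
\end{align*}
Thus on a neighborhood $W_0 \subset W$ of $y_0$ the formula $\phi_1^{-1} = \psi \circ \pi$ holds on $W_0 \cap \calm$. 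Since $\phi_2$ is continuous at $z_0$, there is an open neighborhood $O \subset \phi_2^{-1}(X)$ of $z_0$ with $\phi_2(O) \subset W_0 \cap \calm$, and on $O$ we obtain
\begin{align*}
\varphi = \phi_1^{-1} \circ \phi_2 = \psi \circ \pi \circ \phi_2,
\end{align*}
which is a composition of $C^k$-maps and hence $C^k$. Interchanging the roles of $\phi_1$ and $\phi_2$ establishes that $\varphi^{-1}$ is $C^k$ as well, completing the proof.

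The essential step is the construction of the projection $\pi$; this relies on the finite dimensionality of $\calm$ together with Hahn-Banach, and is exactly the ingredient that makes the statement work even when $W$ is an arbitrary Banach space with no complemented-subspace hypothesis.
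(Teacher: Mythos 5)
The paper itself gives no proof of this lemma; it is quoted verbatim from \cite{fillnm} (Lemma~6.1.1 there), whose proof is essentially the local-extension argument you give. Your argument is correct: the only real obstacle is that $\phi_1^{-1}$ lives only on $U_1 \cap \calm$, and you resolve it by building a continuous linear $\pi \in L(W,\bbr^m)$ with $\pi \circ D\phi_1(w_0)$ invertible, applying the inverse function theorem to $\pi \circ \phi_1$ in $\bbr^m$, and writing $\phi_1^{-1} = \psi \circ \pi$ locally on the manifold. In \cite{fillnm} the ambient space is a separable Hilbert space and $\pi$ is taken to be the orthogonal projection onto $\mathrm{im}\, D\phi_1(w_0)$; your substitution of Hahn--Banach functionals dual to a basis of that finite-dimensional image is exactly what is needed to make the same argument work in the general Banach space $W$ of the present paper, with no complementation hypothesis. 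One small imprecision: $\pi \circ D\phi_1(w_0)$ equals the identity only if you choose $v_j = D\phi_1(w_0)e_j$ for the standard basis $(e_j)$ of $\bbr^m$; for an arbitrary basis of $F$ it is merely an invertible linear map --- but invertibility is all the inverse function theorem requires, so nothing in your proof breaks.
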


\begin{definition}\label{def-tang-raum}
Let $y \in \calm$ be arbitrary. The \emph{tangent space} of $\calm$ to $y$ is the subspace
\begin{align*}
T_y \calm := D \phi(z)\mathbb{R}^m,
\end{align*}
where $z := \phi^{-1}(y)$, and $\phi : V \rightarrow U \cap \calm$ denotes a local parametrization of $\calm$ around $y$.
\end{definition}

\begin{remark}
By Lemma \ref{lemma-change-para} the Definition \ref{def-tang-raum} of the tangent space does not depend on the choice of the parametrization.
\end{remark}

\begin{proposition}\label{prop-global-para}
For each $y_0 \in \calm$ there exist a mapping $\phi \in C_b^k(\bbr^m,W)$, an open neighborhood $O \subset \bbr^m$ of $y_0$ such that $\phi|_O : O \to U \cap \calm$ is a local parametrization around $y_0$, and a continuous linear operator $\ell \in L(W,\bbr^m)$ such that $\phi(\ell(y)) = y$ for all $y \in U \cap \calm$. Furthermore, for all $y \in U \cap \calm$ and all $w \in T_y \calm$ we have $w = D\phi(z) \ell(w)$, where $z := \ell(y) \in O$.
\end{proposition}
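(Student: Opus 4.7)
The plan has three main ingredients: construct $\ell$ by Hahn-Banach, reparametrize locally so that $\ell$ is a left inverse on $\calm$, and extend to all of $\bbr^m$ with a smooth cutoff.

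First I would start with any local parametrization $\tilde\phi : \tilde O \to \tilde U \cap \calm$ around $y_0$, say $\tilde\phi(z_0) = y_0$. The derivative $T := D\tilde\phi(z_0) \in L(\bbr^m, W)$ is injective by the definition of a $C^k$-submanifold, so $F := T(\bbr^m)$ is a closed $m$-dimensional subspace and $T^{-1} : F \to \bbr^m$ is a continuous linear isomorphism. Writing $T^{-1} = (\lambda_1, \dots, \lambda_m)$ with $\lambda_i \in F^*$ and extending each $\lambda_i$ to some $\ell_i \in W^*$ by Hahn-Banach, I set $\ell := (\ell_1, \dots, \ell_m) \in L(W, \bbr^m)$, which by construction satisfies $\ell \circ T = \Id_{\bbr^m}$.

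Next I consider $\psi := \ell \circ \tilde\phi$ on $\tilde O$. Since $D\psi(z_0) = \ell \circ T = \Id$, the inverse function theorem in $\bbr^m$ yields an open neighborhood $O_1$ of $z_0$ on which $\psi$ is a $C^k$-diffeomorphism onto its image $O_1' := \psi(O_1)$. Setting $\phi_0 := \tilde\phi \circ \psi^{-1} : O_1' \to W$ gives a new local parametrization of $\calm$ around $y_0$ with the key property $\ell \circ \phi_0 = \Id_{O_1'}$. For the extension to $\bbr^m$ I pick an open set $O$ with $z_1 := \ell(y_0) \in O$ and $\overline{O}$ compact in $O_1'$, together with a cutoff $\chi \in C^\infty(\bbr^m, [0,1])$ that equals $1$ on $O$ and has compact support in $O_1'$. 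Then I define $\phi(z) := \chi(z)(\phi_0(z) - y_0) + y_0$ for $z \in O_1'$ and $\phi(z) := y_0$ otherwise. This extends smoothly to all of $\bbr^m$ (since off the compact set $\supp(\chi)$ the map is constant), and all derivatives up to order $k$ are bounded, so $\phi \in C_b^k(\bbr^m, W)$. Moreover $\phi = \phi_0$ on $O$, so $\phi|_O$ is a local parametrization around $y_0$ when $U$ is chosen open in $W$ with $U \cap \calm = \phi_0(O)$, which is possible because $\phi_0(O)$ is open in the relative topology of $\calm$.

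Both remaining claims then follow easily. For $y \in U \cap \calm$ we have $y = \phi_0(z)$ for a unique $z \in O$, whence $\ell(y) = z \in O$ and $\phi(\ell(y)) = \phi_0(z) = y$. For the tangent-space identity, given $y \in U \cap \calm$, $z = \ell(y) \in O$, and $w \in T_y \calm = D\phi(z)\bbr^m$ (see Definition \ref{def-tang-raum}), I write $w = D\phi(z)v$ for some $v \in \bbr^m$; applying the linear map $\ell$ and using $\ell \circ \phi = \Id$ on $O$ via the chain rule gives $\ell(w) = v$, so $w = D\phi(z)\ell(w)$. The only genuine technical point is ensuring that the cutoff-extended $\phi$ agrees with $\phi_0$ on an open neighborhood of $z_1$, which is arranged by the strict inclusion $\overline{O} \subset O_1'$; the Hahn-Banach extension is routine thanks to the finite-dimensionality of $F$.
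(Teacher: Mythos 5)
Your proposal is correct. Note that the paper does not actually prove this proposition; it simply defers to \cite[Remark 6.1.1, Prop.\ 6.1.2 and Lemma 6.1.3]{fillnm}, so your argument supplies in full the standard construction that the paper outsources. The three steps are all sound: the Hahn--Banach extension of the coordinate functionals of $T^{-1}=D\tilde\phi(z_0)^{-1}|_F$ gives $\ell$ with $\ell\circ D\tilde\phi(z_0)=\Id_{\bbr^m}$ (using that $F$ is finite dimensional, hence closed, and that $T^{-1}$ is automatically continuous); the inverse function theorem applied to $\psi=\ell\circ\tilde\phi$ produces the reparametrized chart $\phi_0$ with the crucial identity $\ell\circ\phi_0=\Id$ on $O_1'$, from which both $\phi(\ell(y))=y$ and, after differentiating $\ell\circ\phi=\Id$ on $O$, the tangent-space identity $w=D\phi(z)\ell(w)$ follow immediately; and the cutoff $\chi$ with $\overline{O}$ compactly contained in $O_1'$ yields a globally defined $C_b^k$ extension, since the extended map is constant off the compact set $\supp(\chi)$ and agrees with $\phi_0$ on the open set $O\ni\ell(y_0)$. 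You also correctly read the (slightly abusive) statement ``neighborhood of $y_0$'' as a neighborhood of $\ell(y_0)=\phi^{-1}(y_0)$, which is the only sensible interpretation and the one used in the main text. This is essentially the same construction as in Filipovi\'c's book, so nothing is lost relative to the cited source; the only cosmetic remark is that one should note explicitly that $\phi_0(O)$ is relatively open in $\calm$ (so that it can be written as $U\cap\calm$), which you do.
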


\begin{proof}
This follows from \cite[Remark 6.1.1, Prop. 6.1.2 and Lemma 6.1.3]{fillnm}.
\end{proof}

\section{Functions in Banach spaces}\label{app-functions}

In this appendix we provide the required results about functions in Banach spaces; in particular smooth functions, Lipschitz continuous functions and linear operators. Further details can be found, for example in \cite{Abraham}. For what follows, let $E,F,G,H$ be Banach spaces, and let $k \in \bbn_0$ be a nonnegative integer.

\begin{proposition}\cite[Prop. 2.4.8]{Abraham}\label{prop-Cb1-Lipschitz}
Every mapping $f \in C_b^1(E,F)$ is Lipschitz continuous.
\end{proposition}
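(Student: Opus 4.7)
The plan is to deduce Lipschitz continuity directly from the boundedness of the Fréchet derivative. By definition, $f \in C_b^1(E,F)$ means $f$ is continuously Fréchet differentiable and the derivative map $Df : E \to L(E,F)$ is bounded; set
\[
M := \sup_{x \in E} \| Df(x) \|_{L(E,F)} < \infty.
\]
I would also observe that for a $C^1$ map between Banach spaces, the map $t \mapsto f(x+t(y-x))$ is continuously differentiable from $[0,1]$ to $F$ with derivative $Df(x+t(y-x))(y-x)$, which is the standard chain-rule input for what follows.

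The key step is the fundamental theorem of calculus for Banach-space valued paths: for arbitrary $x,y \in E$,
\[
f(y) - f(x) = \int_0^1 Df\bigl(x + t(y-x)\bigr)(y - x) \, dt,
\]
where the integral is a Bochner (or Riemann) integral in $F$. Taking norms and pulling them inside the integral, one obtains
\[
\| f(y) - f(x) \|_F \le \int_0^1 \| Df(x + t(y-x)) \|_{L(E,F)} \, \| y - x \|_E \, dt \le M \| y - x \|_E.
\]
Hence $f$ is globally Lipschitz with constant $M$.

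There is essentially no obstacle here; the only thing that requires attention is making sure the Banach-space version of the mean value identity is invoked correctly (the continuity of $Df$ guarantees the integrand is continuous in $t$, so the integral exists as a Riemann integral in $F$ and the identity follows by applying an arbitrary $\varphi \in F^*$ and reducing to the scalar fundamental theorem of calculus). Once that identity is in hand, the Lipschitz estimate is one line.
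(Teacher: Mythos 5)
Your argument is correct and is exactly the standard mean value inequality proof that the paper delegates to the cited reference \cite[Prop. 2.4.8]{Abraham}: the paper gives no in-text proof, and your route via the Banach-space fundamental theorem of calculus and the bound $\sup_{x}\|Df(x)\|<\infty$ is the intended one. The one point worth being explicit about is that the definition of $C_b^1$ guarantees precisely this supremum is finite, which you have used correctly.
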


\begin{proposition}[Chain rule]\label{prop-smooth-chain}
Let $f : E \to F$, $g : F \to G$ be differentiable mappings. Then the following statements are true:
\begin{enumerate}
\item[(a)] The composition $g \circ f : E \to G$ is also differentiable, and we have
\begin{align*}
D (g \circ f)(x) = Dg(f(x)) \circ Df(x), \quad x \in E.
\end{align*}
\item[(b)] If $f$ and $g$ are of class $C^k$, then $g \circ f$ is also of class $C^k$.

\item[(c)] If $f$ and $g$ are of class $C_b^k$, then $g \circ f$ is also of class $C_b^k$.
\end{enumerate}
\end{proposition}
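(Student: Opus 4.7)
The plan is to prove the three claims in the stated order, reducing (b) and (c) inductively to the base case (a).

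For part (a), the strategy is the standard Banach-space Fr\'echet chain rule. First I would set $v := f(x+h) - f(x)$ and substitute into the expansion $g(f(x)+v) - g(f(x)) = Dg(f(x))\, v + \varepsilon_g(v)\, \|v\|$, where $\varepsilon_g(v) \to 0$ as $v \to 0$. Then I would substitute the expansion $v = Df(x)h + \varepsilon_f(h)\, \|h\|$. Collecting the $h$-linear term identifies the candidate derivative as $Dg(f(x)) \circ Df(x)$, and the remainder is shown to be $o(\|h\|)$ using continuity of $f$ at $x$ and local boundedness of $Df(x)$ in operator norm, together with $\|v\| = O(\|h\|)$.

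For part (b), I would induct on $k$. The base case $k=0$ is the continuity of the composition of continuous maps. For the inductive step from $k$ to $k+1$, I use the identity from (a),
\begin{align*}
D(g \circ f)(x) \;=\; B\bigl(Dg(f(x)),\, Df(x)\bigr),
\end{align*}
where $B : L(F,G) \times L(E,F) \to L(E,G)$ is the continuous bilinear composition map $B(A_1,A_2) = A_1 \circ A_2$. Continuous bilinear maps are $C^\infty$ (an independent fact, proved directly from the definition of the Fr\'echet derivative), so it suffices to show $C^k$-regularity of the two entries. The second entry $Df$ is $C^k$ by hypothesis. For the first entry $Dg \circ f$, the induction hypothesis applies since $Dg$ is $C^k$ and $f$ is $C^{k+1}$, hence $C^k$. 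Therefore $D(g \circ f)$ is $C^k$, so $g \circ f \in C^{k+1}$.

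For part (c), once the $C^k$-regularity is established, it remains to bound all iterated derivatives uniformly. The natural tool is Fa\`a di Bruno's formula in Banach spaces, which expresses $D^n(g \circ f)(x)$ for $1 \le n \le k$ as a finite sum of terms of the form $D^i g(f(x))\, \bigl(D^{j_1}f(x),\ldots,D^{j_i}f(x)\bigr)$ with $j_1 + \cdots + j_i = n$. The uniform estimate then follows by applying the $C_b^k$-bounds on the finitely many factors and summing. The main obstacle I anticipate is not substantive but organisational: one must be careful in (b) to invoke the smoothness of the bilinear composition $B$ as an independent lemma (proven directly from Fr\'echet's definition, not via the chain rule) to avoid a circular induction; with that in hand, the remainder of the argument is bookkeeping.
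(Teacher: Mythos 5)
Your proposal is correct and is essentially the standard argument behind the references the paper itself invokes: the paper proves this proposition simply by citing Abraham--Marsden--Ratiu (Thm.~2.4.3 for (a), (b) and the higher order chain rule on p.~88 for (c)), and your Fr\'echet remainder estimate, the induction via the continuous bilinear composition map $B(A_1,A_2)=A_1\circ A_2$, and the Fa\`a di Bruno bound for (c) are exactly the content of those citations. Your explicit flagging of the need to establish smoothness of $B$ independently of the chain rule is a correct and welcome precaution against circularity.
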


\begin{proof}
Parts (a) and (b) follow from \cite[Thm. 2.4.3]{Abraham}, and part (c) is a consequence of the higher order chain rule; see \cite[p. 88]{Abraham}.
\end{proof}

\begin{proposition}\label{prop-smooth-linear}
Let $f : E \to F$ be differentiable, and let $A \in L(F,G)$ be a continuous linear operator. Then the following statements are true:
\begin{enumerate}
\item[(a)] The composition $A \circ f : E \to G$ is also differentiable.

\item[(b)] If $f$ is of class $C^k$, then $A \circ f$ is also of class $C^k$, and for all $j=0,\ldots,k$ we have
\begin{align}\label{comp-linear-f}
D^j(A \circ f)(x) = A \circ D^j f(x), \quad x \in E.
\end{align}
\item[(c)] If $f$ is of class $C_b^k$, then $A \circ f$ is also of class $C_b^k$.
\end{enumerate}
\end{proposition}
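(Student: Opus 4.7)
The plan is to reduce everything to the chain rule (Proposition~\ref{prop-smooth-chain}) together with the elementary observation that a continuous linear operator is its own derivative. Specifically, I would first note that $A \in L(F,G)$ is itself of class $C_b^\infty$ as a map $F \to G$, with $DA(y) = A$ for every $y \in F$ (and $D^jA \equiv 0$ for $j \geq 2$); moreover, $\|DA(y)\|_{L(F,G)} = \|A\|$ is uniformly bounded. This is immediate from the definition of Fréchet differentiability.

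For part~(a), since $f$ is differentiable and $A$ is differentiable (as above), Proposition~\ref{prop-smooth-chain}(a) gives differentiability of $A \circ f$ with $D(A \circ f)(x) = DA(f(x)) \circ Df(x) = A \circ Df(x)$, which is the case $j = 1$ of formula~(\ref{comp-linear-f}). For part~(b), I would proceed by induction on $j$. The case $j = 0$ is trivial, and $j = 1$ was just shown. For the inductive step, introduce the canonical postcomposition operator
\begin{align*}
\tilde{A}_j : L^j(E;F) \to L^j(E;G), \qquad \tilde{A}_j(B)(v_1,\ldots,v_j) := A\bigl(B(v_1,\ldots,v_j)\bigr),
\end{align*}
where $L^j(E;F)$ denotes the space of continuous $j$-linear maps from $E^j$ to $F$ (canonically isomorphic to the iterated operator space in which $D^j f(x)$ lives). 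The operator $\tilde{A}_j$ is continuous and linear with $\|\tilde{A}_j\| \leq \|A\|$, so the inductive hypothesis $D^j(A \circ f) = \tilde{A}_j \circ D^j f$ combined with part~(a) applied to the differentiable map $D^j f : E \to L^j(E;F)$ and the linear operator $\tilde{A}_j$ yields
\begin{align*}
D^{j+1}(A \circ f)(x) = \tilde{A}_j \circ D(D^j f)(x) = \tilde{A}_{j+1} \circ D^{j+1} f(x),
\end{align*}
which is (\ref{comp-linear-f}) at level $j+1$ under the usual identification. This gives the formula and, as a byproduct, the fact that $A \circ f$ is of class $C^k$ whenever $f$ is (since each $D^j(A \circ f)$ is a composition of continuous maps).

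For part~(c), the uniform bound is immediate from the inductive formula together with $\|\tilde{A}_j\| \leq \|A\|$: for every $x \in E$ and every $j \in \{0,\ldots,k\}$,
\begin{align*}
\|D^j(A \circ f)(x)\| = \|\tilde{A}_j(D^j f(x))\| \leq \|A\| \cdot \|D^j f(x)\|,
\end{align*}
so boundedness of $D^j f$ transfers to boundedness of $D^j(A \circ f)$. There is no genuine obstacle here; the only mildly delicate point is the bookkeeping of the iterated operator spaces $L(E,L(E,\ldots,L(E,F)))$ and the identification with $L^j(E;F)$, which is why I would state the postcomposition operator $\tilde{A}_j$ explicitly before performing the induction.
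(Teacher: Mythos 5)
Your proof is correct and follows essentially the same route as the paper: parts (a) and (b) via the chain rule combined with the observation that a continuous linear operator is its own derivative, and part (c) via the resulting formula $D^j(A \circ f)(x) = A \circ D^j f(x)$. You have merely spelled out the induction and the postcomposition operators $\tilde{A}_j$ explicitly, where the paper leaves these details implicit.
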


\begin{proof}
Parts (a) and (b) are immediate consequences of the chain rule (see Proposition \ref{prop-smooth-chain}), and part (c) follows from the formula (\ref{comp-linear-f}).
\end{proof}

\begin{proposition}[Leibniz or product rule]\label{prop-Leibniz}
Let $f : E \to F$, $g : E \to G$ be differentiable mappings, and let $B \in L^{(2)}(F \times G,H)$ be a continuous bilinear operator. We define the new mapping
\begin{align*}
B(f,g) : E \to H, \quad x \mapsto B(f(x),g(x)).
\end{align*}
Then the following statements are true:
\begin{enumerate}
\item[(a)] The mapping $B(f,g)$ is also differentiable, and we have
\begin{align}\label{Leibniz-rule}
D(B(f,g))(x)v = B(Df(x)v,g(x)) + B(f(x),Df(x)v), \quad x,v \in E.
\end{align}
\item[(b)] If $f$ and $g$ are of class $C^k$, then $B(f,g)$ is also of class $C^k$.

\item[(c)] If $f$ and $g$ are of class $C_b^k$, then $B(f,g)$ is also of class $C_b^k$.
\end{enumerate}
\end{proposition}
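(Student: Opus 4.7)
The plan is to establish part (a) directly from the definition of Fréchet differentiability, and then obtain parts (b) and (c) by induction on $k$, recasting the Leibniz formula as a composition of previously-established smooth operations so that the inductive hypothesis applies cleanly.

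For part (a), I would fix $x, v \in E$ and use the bilinearity of $B$ to decompose
\begin{align*}
B(f(x+v),g(x+v)) - B(f(x),g(x)) &= B(f(x+v)-f(x),g(x)) + B(f(x),g(x+v)-g(x)) \\
&\quad + B(f(x+v)-f(x),g(x+v)-g(x)).
\end{align*}
Writing $f(x+v) - f(x) = Df(x)v + r_f(v)$ with $r_f(v) = o(|v|)$ and similarly for $g$, the first two terms contribute the claimed linear map $v \mapsto B(Df(x)v, g(x)) + B(f(x), Dg(x)v)$ up to remainders bounded by $\|B\|$ times the norm of $g(x)$ (respectively $f(x)$) times $|r_f(v)|$ (respectively $|r_g(v)|$), which are $o(|v|)$. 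The cross term satisfies $\|B(f(x+v)-f(x),g(x+v)-g(x))\| \leq \|B\| \cdot |f(x+v)-f(x)| \cdot |g(x+v)-g(x)| = O(|v|^2)$, hence is also $o(|v|)$. This gives the formula (\ref{Leibniz-rule}); I should also note the evident typo there, where the second summand should read $B(f(x),Dg(x)v)$.

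For parts (b) and (c), the clean way is to observe that the map $x \mapsto D(B(f,g))(x) \in L(E,H)$ itself factors through continuous bilinear operations. Define continuous bilinear operators
\begin{align*}
\tilde{B}_1 : L(E,F) \times G \to L(E,H), \quad &\tilde{B}_1(A,y)v := B(Av,y),\\
\tilde{B}_2 : F \times L(E,G) \to L(E,H), \quad &\tilde{B}_2(y,A)v := B(y,Av).
\end{align*}
Then (\ref{Leibniz-rule}) reads $D(B(f,g)) = \tilde{B}_1(Df,g) + \tilde{B}_2(f,Dg)$. I proceed by induction on $k$. The base case $k=0$ follows from continuity of $B$, $f$, $g$. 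Assuming the proposition for $k-1$ and given $f, g \in C^k$, the maps $Df, g, f, Dg$ are all of class $C^{k-1}$, so by the inductive hypothesis applied to $\tilde{B}_1$ and $\tilde{B}_2$, the derivative $D(B(f,g))$ is of class $C^{k-1}$, whence $B(f,g)$ is of class $C^k$.

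Part (c) follows along the same inductive line: in the base case, boundedness of $B(f,g)$ is immediate from $\|B(f(x),g(x))\| \leq \|B\| \cdot |f(x)| \cdot |g(x)|$ and the assumed bounds on $|f|$ and $|g|$. In the inductive step, the bounds on $\tilde{B}_1(Df,g)$ and $\tilde{B}_2(f,Dg)$ up to order $k-1$ follow from the inductive hypothesis, giving the required bounds on $D^j(B(f,g))$ for $1 \leq j \leq k$. I do not anticipate a real obstacle here; the only mildly delicate point is setting up $\tilde{B}_1, \tilde{B}_2$ so that the Leibniz identity becomes an instance of the same proposition applied in lower order, which is a standard maneuver for functorial-type induction on the order of differentiability.
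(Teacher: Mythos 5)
Your proof is correct. Note, however, that the paper does not actually prove this proposition: parts (a) and (b) are delegated to \cite[Thm.~2.4.4]{Abraham}, and part (c) to ``the multidimensional Leibniz rule, which follows inductively from (\ref{Leibniz-rule})''. Your argument is a self-contained expansion of exactly that route. The three-term bilinear decomposition together with the $o(|v|)$ and $O(|v|^2)$ estimates is the standard proof of the cited theorem, and your device of introducing the continuous bilinear operators $\tilde{B}_1$, $\tilde{B}_2$ so that $D(B(f,g))=\tilde{B}_1(Df,g)+\tilde{B}_2(f,Dg)$ becomes an instance of the same proposition at order $k-1$ is a clean way to make the paper's ``follows inductively'' precise; it has the added benefit of treating (b) and (c) by one and the same induction, with the base case carrying the boundedness of $B(f,g)$ itself. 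The only point worth writing out is the boundedness of $\tilde{B}_1$ and $\tilde{B}_2$, e.g.\ $\|\tilde{B}_1(A,y)\|_{L(E,H)}\le\|B\|\,\|A\|_{L(E,F)}\,|y|$, which is immediate. Finally, you are right that the second summand of (\ref{Leibniz-rule}) should read $B(f(x),Dg(x)v)$: as printed, $Df(x)v$ lies in $F$ rather than $G$, so the displayed formula is a typo; the paper's own application of the rule in the proof of Proposition \ref{prop-decomposition} uses the corrected form.
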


\begin{proof}
Parts (a) and (b) are immediate consequences of \cite[Thm. 2.4.4]{Abraham}, and part (c) is a consequence of the multidimensional Leibniz rule, which follows inductively from (\ref{Leibniz-rule}).
\end{proof}

\begin{proposition}\label{prop-bilinear-Lipschitz}
Let $f : E \to F$, $g : E \to G$ be Lipschitz continuous and bounded mappings, and let $B \in L^{(2)}(F \times G,H)$ be a continuous bilinear operator. We define the new mapping
\begin{align*}
B(f,g) : E \to H, \quad x \mapsto B(f(x),g(x)).
\end{align*}
Then $B(f,g)$ is also Lipschitz continuous and bounded.
\end{proposition}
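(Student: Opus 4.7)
The plan is to exploit the standard operator-norm bound for a continuous bilinear map, namely $\|B(u,v)\|_H \le \|B\|\,\|u\|_F\,\|v\|_G$ for all $u \in F$, $v \in G$, and then combine it in two ways: directly for boundedness, and through a telescoping identity for the Lipschitz estimate.

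First I would handle boundedness: since $f$ and $g$ are bounded, set $M_f := \sup_{x \in E}\|f(x)\|_F$ and $M_g := \sup_{x \in E}\|g(x)\|_G$. Then the bilinear estimate immediately yields
\begin{align*}
\|B(f(x),g(x))\|_H \le \|B\| \, M_f \, M_g \qquad \text{for all } x \in E,
\end{align*}
which proves that $B(f,g)$ is bounded.

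Next I would establish Lipschitz continuity using the standard mixed-term decomposition
\begin{align*}
B(f(x),g(x)) - B(f(y),g(y)) = B\bigl(f(x)-f(y),\, g(x)\bigr) + B\bigl(f(y),\, g(x)-g(y)\bigr),
\end{align*}
which follows from the bilinearity of $B$. Applying the bilinear bound to each summand, together with the Lipschitz constants $L_f, L_g$ of $f$ and $g$ and the boundedness constants $M_f, M_g$, yields
\begin{align*}
\|B(f(x),g(x)) - B(f(y),g(y))\|_H \le \|B\|\bigl(L_f\,M_g + M_f\,L_g\bigr)\,\|x - y\|_E,
\end{align*}
so $B(f,g)$ is Lipschitz continuous with constant at most $\|B\|(L_f M_g + M_f L_g)$.

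There is no real obstacle here; the entire argument is an exercise in applying the definition of the operator norm on $L^{(2)}(F \times G, H)$ together with the triangle inequality. The only thing to be careful about is that both the Lipschitz and the boundedness hypotheses on $f$ and $g$ are genuinely needed, since the Lipschitz estimate for the product requires controlling one factor in sup-norm while the other varies.
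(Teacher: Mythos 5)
Your proposal is correct and follows essentially the same argument as the paper: the bilinear operator-norm bound gives boundedness directly, and the add-and-subtract (telescoping) decomposition of the difference gives the Lipschitz estimate with the same constant $|B|(\| f \|_{\Lip} \| g \|_{\infty} + \| f \|_{\infty} \| g \|_{\Lip})$. The only cosmetic difference is which mixed term you insert ($B(f(y),g(x))$ versus the paper's $B(f(x),g(y))$), which changes nothing of substance.
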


\begin{proof}
For all $x \in E$ we have
\begin{align*}
|B(f(x),g(x))| \leq |B| \, |f(x)| \, |g(x)| \leq |B| \, \| f \|_{\infty} \| g \|_{\infty},
\end{align*}
and for all $x,y \in E$ we have
\begin{align*}
&|B(f(x),g(x)) - B(f(y),g(y))|
\\ &= |B(f(x),g(x)) - B(f(x),g(y)) + B(f(x),g(y)) - B(f(y),g(y))|
\\ &\leq |B(f(x),g(x)-g(y))| + |B(f(x)-f(y),g(y))|
\\ &\leq |B| \, | f(x) | \, |g(x)-g(y)| + |B| \, |f(x)-f(y)| \, |g(y)|
\\ &\leq |B| \big( \| f \|_{\infty} \| g \|_{\Lip} + \| f \|_{\Lip} \| g \|_{\infty} \big) |x-y|,
\end{align*}
completing the proof.
\end{proof}

\begin{lemma}\label{lemma-embedding-operators}
We have the canonical injection $L(F,G) \hookrightarrow L(L(E,F),L(E,G))$.
\end{lemma}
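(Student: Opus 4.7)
The plan is to exhibit the canonical embedding explicitly and verify its four defining properties (well-definedness, linearity, continuity, injectivity) by direct estimates. Define
\begin{align*}
\Psi : L(F,G) \to L(L(E,F),L(E,G)), \quad \Psi(A)T := A \circ T.
\end{align*}
The first step is to check that $\Psi(A)$ actually lies in the target space: for fixed $A \in L(F,G)$ and $T \in L(E,F)$, the composition $A \circ T$ is a bounded linear operator from $E$ to $G$ with $|A \circ T| \leq |A|\, |T|$. Hence $T \mapsto A \circ T$ is a linear map $L(E,F) \to L(E,G)$, and the preceding estimate shows that it is bounded with operator norm at most $|A|$; thus $\Psi(A) \in L(L(E,F),L(E,G))$ and $|\Psi(A)| \leq |A|$.

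Next I would observe that $\Psi$ itself is linear in $A$: this is immediate from distributivity of composition over addition and scalar multiplication on the left. The bound $|\Psi(A)| \leq |A|$ then yields continuity of $\Psi$.

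The only nontrivial step is injectivity, and for this I would use the Hahn--Banach theorem to produce a rank-one test operator. Fix any nonzero $e \in E$ and, by Hahn--Banach, choose $\varphi \in E^{*}$ with $\varphi(e) = 1$. For every $y \in F$ define $T_y \in L(E,F)$ by $T_y(x) := \varphi(x) y$. If $\Psi(A) = 0$, then in particular $\Psi(A)T_y = 0$, and evaluating at $e$ gives $0 = (A \circ T_y)(e) = \varphi(e) Ay = Ay$. Since $y \in F$ was arbitrary we conclude $A = 0$, which proves that $\Psi$ is injective. (If one wants the embedding to be isometric, a second Hahn--Banach argument, applied to a near-norm-attaining $y \in F$ for $A$, upgrades $|\Psi(A)| \leq |A|$ to equality; this is not needed for the stated conclusion.) There is no real obstacle here, as the argument is a standard exercise, so I expect the proof to be just the short verification outlined above.
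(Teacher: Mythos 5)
Your proof is correct and follows essentially the same route as the paper: define the embedding by left-composition, bound its norm by $|A|$, and check injectivity by testing against operators in $L(E,F)$. The only difference is that you make the injectivity step fully explicit via a Hahn--Banach rank-one operator $T_y = \varphi(\cdot)y$, whereas the paper asserts directly that $BA=0$ for all $A$ forces $By=0$ for all $y$; your version supplies the detail the paper leaves implicit.
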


\begin{proof}
Consider the linear operator
\begin{align*}
\Phi : L(F,G) \to L(L(E,F),L(E,G)), \quad B \mapsto (A \mapsto BA).
\end{align*}
Then we have
\begin{align*}
| \Phi |_{L(L(E,F),L(E,G))} &= \sup_{| A | \leq 1} | BA |_{L(E,G)}
\\ &\leq \sup_{| A | \leq 1} | B |_{L(F,G)} | A |_{L(E,F)} \leq | B |_{L(F,G)},
\end{align*}
showing that $\Phi$ is continuous. Let $B \in L(F,G)$ with $\Phi(B) = 0$ be arbitrary. Then we have $BA = 0$ for all $A \in L(E,F)$, and hence $By = 0$ for all $y \in F$, showing that $\Phi$ is one-to-one.
\end{proof}

\begin{lemma}\label{lemma-diff-fixed-x}
Let $f : F \to L(E,F)$ be a mapping of class $C^1$. For any $x \in E$ we define the new mapping
\begin{align*}
f_x : F \to F \quad y \mapsto f(y)x.
\end{align*}
Then the mapping $f_x$ is also of class $C^1$, and we have
\begin{align*}
Df_x(y)v = (Df(y)v)x, \quad y,v \in F.
\end{align*}
\end{lemma}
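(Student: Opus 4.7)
The plan is to reduce this to a single application of Proposition \ref{prop-smooth-linear}, using the fact that evaluation at a fixed point $x \in E$ is a continuous linear operator. Specifically, I would first introduce the evaluation map
\begin{equation*}
\mathrm{ev}_x : L(E,F) \to F, \quad A \mapsto Ax,
\end{equation*}
and observe that $\mathrm{ev}_x$ is linear and satisfies $|\mathrm{ev}_x(A)| = |Ax| \leq |A|\,|x|$, so $\mathrm{ev}_x \in L(L(E,F),F)$ with operator norm at most $|x|$. This is the only nontrivial step, but it is entirely routine.

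Next, I would rewrite the mapping $f_x$ as the composition
\begin{equation*}
f_x = \mathrm{ev}_x \circ f : F \to F.
\end{equation*}
Since $f$ is of class $C^1$ by assumption and $\mathrm{ev}_x$ is a continuous linear operator, Proposition \ref{prop-smooth-linear}(a)--(b) immediately yields that $f_x$ is of class $C^1$, with
\begin{equation*}
Df_x(y) = \mathrm{ev}_x \circ Df(y), \quad y \in F,
\end{equation*}
by formula (\ref{comp-linear-f}) applied with $j=1$. Evaluating both sides at $v \in F$ then gives
\begin{equation*}
Df_x(y)v = \mathrm{ev}_x(Df(y)v) = (Df(y)v)x,
\end{equation*}
which is exactly the claimed formula.

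There is no genuine obstacle in this proof; the only thing one must keep track of is the interpretation of $Df(y) \in L(F,L(E,F))$, so that $Df(y)v \in L(E,F)$ and $(Df(y)v)x \in F$ make sense. The lemma is essentially a bookkeeping consequence of Proposition \ref{prop-smooth-linear}.
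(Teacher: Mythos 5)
Your proof is correct and follows exactly the same route as the paper: both introduce the evaluation operator ${\rm ev}_x \in L(L(E,F),F)$, write $f_x = {\rm ev}_x \circ f$, and apply Proposition \ref{prop-smooth-linear}. Nothing is missing.
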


\begin{proof}
Let ${\rm ev}_x : L(E,F) \to F$ be the evaluation operator $A \mapsto Ax$. Then ${\rm ev}_x$ is a continuous linear operator, and we have $f_x = {\rm ev}_x \circ f$. Hence, by the chain rule (see Proposition \ref{prop-smooth-linear}), the mapping $f$ is of class $C^1$, and for all $y,v \in F$ we obtain
\begin{align*}
Df_x(y)v = D({\rm ev}_x \circ f)(y)v = {\rm ev}_x \circ Df(y)v = (Df(y)v)x,
\end{align*}
completing the proof.
\end{proof}

For the next result recall also that
\begin{align}\label{identification}
L(E,L(E,F)) \cong L^2(E \times E,F).
\end{align}

\begin{proposition}\label{prop-sum-in-drift}
Let $f : F \to L(E,F)$ be of class $C^1$. Then we have
\begin{align*}
Df(y)f(z)(v,w) = D f_w(y) f_v(z)
\end{align*}
for all $y,z \in F$ and all $v,w \in E$.
\end{proposition}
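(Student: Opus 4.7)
The plan is to expand both sides using the definitions together with Lemma \ref{lemma-diff-fixed-x}, and then observe that they agree termwise. The calculation is essentially bookkeeping once the identification (\ref{identification}) is fixed.

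First I would interpret the left-hand side. Since $Df(y) \in L(F, L(E,F))$ and $f(z) \in L(E,F)$, the composition $Df(y) \circ f(z)$ lies in $L(E, L(E,F))$, which by (\ref{identification}) is identified with $L^{(2)}(E \times E, F)$. Under this identification,
\begin{align*}
Df(y)f(z)(v,w) = \bigl( Df(y)(f(z)v) \bigr) w.
\end{align*}

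Next I would treat the right-hand side using Lemma \ref{lemma-diff-fixed-x}, applied with the fixed vector $w \in E$. That lemma gives $Df_w(y) u = (Df(y)u) w$ for every $u \in F$. Taking $u := f_v(z) = f(z) v \in F$ then produces
\begin{align*}
Df_w(y) f_v(z) = \bigl( Df(y)(f(z)v) \bigr) w,
\end{align*}
which matches the expression just obtained for the left-hand side, so equality holds.

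The only delicate point is to keep the ordering of the two arguments of the bilinear form consistent with the convention used in (\ref{identification}); once this is pinned down, the whole argument reduces to the two one-line computations above, so I do not expect any substantial obstacle beyond fixing the identification conventions.
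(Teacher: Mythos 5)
Your argument is correct and coincides with the paper's own proof: both use the identification $L(E,L(E,F)) \cong L^{(2)}(E \times E,F)$ to write $Df(y)f(z)(v,w) = (Df(y)(f(z)v))w$ and then invoke Lemma \ref{lemma-diff-fixed-x} (with the fixed vector $w$ and the point $f_v(z) = f(z)v$) to identify this with $Df_w(y)f_v(z)$. The paper merely presents the same two steps as a single chain of equalities rather than expanding each side separately.
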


\begin{proof}
Using the identification (\ref{identification}) and Lemma \ref{lemma-diff-fixed-x} we obtain
\begin{align*}
Df(y)f(z)(v,w) = (D f(y)f(z) v) w = (D f(y) f_v(z))w = D f_w(y) f_v(z),
\end{align*}
completing the proof.
\end{proof}

\end{appendix}


\begin{thebibliography}{20}
 
\bibitem{Abraham} Abraham, R., Marsden, J.~E., Ratiu, T. (1988):
  \textit{Manifolds, Tensor Analysis, and Applications}. Springer, New York.

\bibitem{Arcones} Arcones, M.~A. (1995):
  On the law of the iterated logarithm for Gaussian processes.
  \textit{Journal of Theoretical Probability} {\bf 8}(4), 877--904.

\bibitem{Armstrong} Armstrong, J., Brigo, D., Cass, T., Rossi Ferrucci, E. (2022):
  Non-geometric rough paths on manifolds.
  \textit{Journal of the London Mathematical Society} {\bf 106}(2), 756--817.

\bibitem{Cass} Cass, T., Litterer, C., Lyons, T. (2011):
  \textit{Rough paths on manifolds.} New Trends in Stochastic Analysis and Related Topics (Interdisciplinary Mathematical Sciences), World Scientific, pp. 33--88.

\bibitem{Ciotir} Ciotir, I., R\u{a}\c{s}canu, A. (2009):
  Viability for differential equations driven by fractional Brownian motion.
  \textit{Journal of Differential Equations} {\bf 247}(5), 1505--1528.

\bibitem{Coutin} Coutin, L., Marie, N. (2017):
  Invariance for rough differential equations.
  \textit{Stochastic Processes and their Applications} {\bf 127}(7), 2373--2395.

\bibitem{Da_Prato} Da~Prato, G., Zabczyk, J. (2014):
  \textit{Stochastic Equations in Infinite Dimensions.} Second Edition. Cambridge University Press, Cambridge.

\bibitem{Duncan} Duncan, T.~E., Jakubowski, J., Pasic-Duncan, B. (2006):
  Stochastic integration for fractional Brownian motion in a Hilbert space.
  \textit{Stochastics and Dynamics} {\bf 6}(1), 53--75.

\bibitem{Duncan-Maslowski} Duncan, T.~E., Maslowski, B., Pasic-Duncan, B. (2002):
  Fractional Brownian motion and stochastic equations in a Hilbert space.
  \textit{Stochastics and Dynamics} {\bf 2}(2), 225--250.

\bibitem{Filipovic-inv} Filipovi\'c, D. (2000):
  Invariant manifolds for weak solutions to stochastic equations.
  \textit{Probability Theory and Related Fields} {\bf 118}(3), 323--341.

\bibitem{fillnm} Filipovi\'c, D. (2001):
  \textit{Consistency Problems for Heath--Jarrow--Morton Interest Rate Models.} Springer, Berlin.

\bibitem{FTT-manifolds} Filipovi\'c, D., Tappe, S., Teichmann, J. (2014):
  Invariant manifolds with boundary for jump-diffusions.
  \textit{Electronic Journal of Probability} {\bf 19}(51), 1--28.

\bibitem{Friz-Hairer} Friz, P.~K., Hairer, M. (2020): 
\textit{A Course on Rough Paths.} Second Edition, Springer, Cham.    
  
\bibitem{Friz-Shekhar} Friz, P.~K., Shekhar, A. (2013):
  Doob-Meyer for rough paths.
  \textit{Bulletin of the Institute of Mathematics, Acadademia Sinica (New Series)} {\bf 8}(1), 73--84.

\bibitem{Atma-book} Gawarecki, L., Mandrekar, V. (2011):
  \textit{Stochastic Differential Equations in Infinite Dimensions with Applications to SPDEs.} Springer, Berlin.

\bibitem{Ghani-1} Ghani Varzaneh, M., Riedel, S. (2023):
  An integrable bound for rough stochastic partial differential equations with applications to invariant manifolds and stability. arXiv: 2307.01679v2.

\bibitem{Ghani-2} Ghani Varzaneh, M., Riedel, S. (2023):
  Invariant manifolds and stability for rough differential equations. arXiv: 2311.02030v2.

\bibitem{Grecksch-Anh} Grecksch, W., Anh, V.~V. (1999):
  A parabolic stochastic differential equation with fractional Brownian motion input.
  \textit{Statistics and Probability Letters} {\bf 41}(4), 337--346.

\bibitem{Grecksch} Grecksch, W., Roth, C., Anh, V.~V. (2009):
  $Q$-fractional Brownian motion in infinite dimensions with application to fractional Black-Scholes market.
  \textit{Stochastic Analysis and Applications} {\bf 27}(1), 149--175.

\bibitem{Gubinelli-Tindel} Gubinelli, M., Tindel, S. (2010):
  Rough evolution equations.
  \textit{Annals of Probability} {\bf 38}(1), 1--75.

\bibitem{Hairer} Hairer, M. (2011):
  Rough Stochastic PDEs.
  \textit{Communications on Pure and Applied Mathematics} {\bf 64}(11), 1547--1585.

\bibitem{Hesse-Neamtu-local} Hesse, R., Neam{\c{t}}u, A. (2019):
  Local mild solutions for rough stochastic partial differential equations.
  \textit{Journal of Differential Equations} {\bf 267}(11), 6480--6538.

\bibitem{Hesse-Neamtu-global} Hesse, R., Neam{\c{t}}u, A. (2022):
  Global solutions for semilinear rough partial differential equations.
  \textit{Stochastics and Dynamics} {\bf 22}(2), 2240011.

\bibitem{Kuehn} Kuehn, C., Neam{\c{t}}u, A. (2023):
  Center manifolds for rough partial differential equations.
  \textit{Electronic Journal of Probability} {\bf 28}(48), 1--31.

\bibitem{Liu-Roeckner} Liu, W., R\"{o}ckner, M. (2015):
\textit{Stochastic Partial Differential Equations: An Introduction.} Springer, Heidelberg.

\bibitem{Melnikov} Melnikov, A., Mishura, Y., Shevchenko, G. (2015):
  Stochastic viability and comparison theorems for mixed stochastic differential equations.
  \textit{Methodology and Computing in Applied Probability} {\bf 17}(1), 169--188.

\bibitem{Milian-manifold} Milian, A. (1997):
  Invariance for stochastic equations with regular coefficients.
  \textit{Stochastic Analysis and Applications} {\bf 15}(1), 91--101.

\bibitem{Nakayama} Nakayama, T. (2004):
  Viability Theorem for SPDE's including HJM framework.
  \textit{Journal of Mathematical Sciences. The University of Tokyo} {\bf 11}(3), 313--324.

\bibitem{Nie} Nie, T., R\u{a}\c{s}canu, A. (2012):
  Deterministic characterization of viability for stochastic differential equation driven by fractional Brownian motion.
  \textit{ESAIM: Control, Optimisation and Calculus of Variations} {\bf 18}(4), 915--929.

\bibitem{Ohashi} Ohashi, A. (2009):
  Fractional term structure models: No-arbitrage and consistency.
  \textit{Annals of Applied Probability} {\bf 19}(4), 1553--1580.

\bibitem{Tappe-rough} Tappe, S. (2024):
 \textit{Mild solutions to semilinear rough partial differential equations.} Accepted for publication as a chapter of the book \textit{Fractional S(P)DEs -- Theory, Numerics, and Optimal Control}, edited by Wilfried Grecksch and Hannelore Lisei, World Scientific, 66 pages. Preprint available at arXiv: 2403.09466.

\bibitem{Teichmann} Teichmann, J. (2011):
  Another approach to some rough and stochastic partial differential equations.
  \textit{Stochastics and Dynamics} {\bf 11}(2--3), 535--550.

\bibitem{Trong} Trong, N.~N., Truong, L.~X., Do, T.~D. (2024):
  Viability for impulsive stochastic differential inclusions driven by fractional Brownian motion.
  \textit{Stochastics and Dynamics.} DOI 10.1142/S0219493724500266.

\bibitem{Viitasaari} Viitasaari, L. (2014):
  \textit{Integration in a Normal World: Fractional Brownian Motion an Beyond}. Aalto University Publication Series.

\bibitem{Xu} Xu, L., Luo, J. (2019):
  Viability for stochastic functional differential equations in Hilbert spaces driven by fractional Brownian motion.
  \textit{Applied Mathematics and Computation} {\bf 341}, 93--110.

\end{thebibliography}
\end{document}